\documentclass[
	,12pt%
	,pagesize%
	,headings=small%
	,paper=a4%
	,parskip=false%
	,abstract=true
	,toc=bibliography
	,DIV=12%
]{scrartcl}

\newcommand{\showbrief}{}

\addtokomafont{sectioning}{\normalfont\bfseries}
\setkomafont{title}{\normalfont}
\setkomafont{subtitle}{\normalfont}
\setkomafont{author}{\normalfont}
\setkomafont{date}{\normalfont}
\addtokomafont{pageheadfoot}{\scshape\small}
\setkomafont{caption}{\footnotesize}
\setkomafont{captionlabel}{\usekomafont{caption}\bfseries}
\setcapindent{0pt}

\usepackage{xspace}
\usepackage{bm}
\usepackage[utf8]{inputenc}
\usepackage[T1]{fontenc}
\usepackage{csquotes}
\usepackage[english]{babel}
\usepackage{graphicx}
\graphicspath{{Pictures/}{/}}
\usepackage[export]{adjustbox}	% key=value interface

\usepackage[dvipsnames]{xcolor}
\usepackage{tikz-cd}
\usetikzlibrary{backgrounds}

\usepackage[draft]{fixme}
\usepackage[nointlimits]{amsmath}
\usepackage{amssymb,mathrsfs,mathtools}
\usepackage{newtxtext}
\usepackage[varvw]{newtxmath}
\AtBeginDocument{%
  \mathchardef\standardeq=\mathcode`=
  \mathchardef\standardless=\mathcode`<
  \mathchardef\standardgreater=\mathcode`>
  \mathcode`="8000
  \mathcode`<"8000
  \mathcode`>"8000
}
\begingroup\lccode`~`=\lowercase{\endgroup
  \def~}{\mathrel{\mspace{0.936mu}\standardeq\mspace{0.936mu}}}
\begingroup\lccode`~`<\lowercase{\endgroup
  \def~}{\mathrel{\mspace{0.153mu}\standardless\mspace{0.153mu}}}
\begingroup\lccode`~`>\lowercase{\endgroup
  \def~}{\mathrel{\mspace{0.153mu}\standardgreater\mspace{0.153mu}}}
\usepackage{upgreek}
\usepackage{braket}
\usepackage{cancel}
\usepackage{siunitx}
\usepackage{aliascnt} %alias counter
\usepackage[amsmath,thmmarks]{ntheorem}
\usepackage[expansion=true,protrusion=true]{microtype}
\usepackage{xkeyval}
\usepackage{authblk}

\usepackage[final,%
	pdftex,%
	bookmarks,%
	bookmarksdepth=3,%
	breaklinks=true,%
	colorlinks=true,%
	urlcolor=NavyBlue,%
	linkcolor=NavyBlue,%
	citecolor=ForestGreen,%	
]{hyperref}%
\usepackage[all]{hypcap}

\AtBeginEnvironment{abstract}{\footnotesize}
\makeatletter
\patchcmd{\@maketitle}{\huge}{\Large}{}{}
\makeatother

\newcommand{\Curve}{\gamma}

\DeclareDocumentCommand{\Path}{ o }{
	\IfValueTF{#1}{%
		\varGamma_{\!#1}%
	}{%
		\varGamma%
	}%
}

\DeclareDocumentCommand{\dotPath}{ o }{
	\IfValueTF{#1}{%
		\dot{\varGamma}_{\!#1}%
	}{%
		{\dot{\varGamma}}%
	}%
}

\newcommand{\Hmeasure}{\mathcal{H}}

\DeclareDocumentCommand{\dist}{ o o o }{
	\IfValueTF{#1}{
		\varrho_{#1}\IfValueTF{#2}{(#2,#3)}{}
	}{
		\varrho\IfValueTF{#2}{(#2,#3)}{}
	}
}

\DeclareDocumentCommand{\distC}{ o o }{
	\varrho_{\gamma}\IfValueTF{#1}{(#1,#2)}{}
}

\DeclareDocumentCommand{\LineEl}{ o o }{
	\IfValueTF{#1}{
	  \omega_{#1}\IfValueTF{#2}{(#2)}{}
	}{
	  \omega\IfValueTF{#2}{(#2)}{}
	}
}

\DeclareDocumentCommand{\dLineEl}{ o o }{
	\IfValueTF{#1}{
	  \dd \omega_{#1}\IfValueTF{#2}{(#2)}{}
	}{
    \dd \omega\IfValueTF{#2}{(#2)}{}
	}
}

\DeclareDocumentCommand{\LineElC}{ o }{
	\IfValueTF{#1}{
        \omega_\Curve(#1)
	}{
        \omega_\Curve
	}
}

\DeclareDocumentCommand{\dLineElC}{ o }{
	\IfValueTF{#1}{
	  \dd \omega_\Curve(#1)
	}{
        \dd \omega_\Curve
	}
}

\DeclareDocumentCommand{\LebesgueM}{ o }{
	\IfValueTF{#1}{
        \lambda(#1)
	}{
        \lambda
	}
}

\DeclareDocumentCommand{\dLebesgueM}{ o }{
	\IfValueTF{#1}{
	  \dd \lambda(#1)
	}{
        \dd \lambda
	}
}

\DeclareDocumentCommand{\Speed}{ o }{
	\IfValueTF{#1}{
	  	h_{#1}
	}{
      	h
	}
}

\DeclareDocumentCommand{\InvSpeed}{ o }{
	\IfValueTF{#1}{
	  	H_{#1}
	}{
      	H
	}
}

\newcommand{\NotSure}[1]{{\color{Cyan}{#1}}}

\newcommand{\brief}[1]{\ifthenelse{\isundefined{\showbrief}}{}{{\color{NavyBlue}{\bigskip\emph{Brief:} #1\newline}}}}%

\newcommand{\Circle}{\mathbb{T}}

\newcommand{\trace}{\mathrm{tr}}

\newcommand{\AmbDim}{m} % dimension of ambient space
\newcommand{\AmbSpace}{{\R^\AmbDim}}

\DeclareDocumentCommand{\Hess}{ O{} }{\operatorname{Hess}_{#1}}

\DeclareDocumentCommand{\converges}{ o }{
	\mathbin{%
		\IfValueTF{#1}{%
			\mathrel{\vbox{\offinterlineskip\ialign{%
				\hfil##\hfil\cr
				$\scriptscriptstyle#1$\cr
				$-\!\!\!-\!\!\!\rightarrow$\cr
			}}}
		}{%
			-\!\!\!-\!\!\!\rightarrow
		}%
	}%
}

\DeclareDocumentCommand{\wconverges}{ o }{
	\mathbin{%
		\IfValueTF{#1}{%
			\mathrel{\vbox{\offinterlineskip\ialign{%
				\hfil##\hfil\cr
				$\scriptscriptstyle#1$\cr
				$-\!\!\!-\!\!\!\rightharpoonup$\cr
			}}}
		}{%
			-\!\!\!-\!\!\!\rightharpoonup
		}%
	}%
}

\newcommand{\dd}{\mathop{}\!\mathrm{d}}

\newcommand{\eps}{\ensuremath{\varepsilon}}

\newcommand{\ceq}{\coloneqq}

\newcommand{\R}{{\mathbb{R}}}

\newcommand{\N}{\mathbb{N}}

\DeclarePairedDelimiterXPP{\pars}[1]{\mathop{}}{\lparen}{\rparen}{}{#1}
\DeclarePairedDelimiterXPP{\abs}[1]{\mathop{}}{\lvert}{\rvert}{}{#1}
\DeclarePairedDelimiterXPP{\norm}[1]{\mathop{}}{\lVert}{\rVert}{}{#1}
\DeclarePairedDelimiterXPP{\seminorm}[1]{\mathop{}}{\lbrack}{\rbrack}{}{#1}
\DeclarePairedDelimiterXPP{\inner}[1]{\mathop{}}{\langle}{\rangle}{}{#1}
\DeclarePairedDelimiterXPP{\iinner}[1]{\mathop{}}{\langle\!\langle}{\rangle\!\rangle}{}{#1}
\DeclarePairedDelimiterXPP{\brackets}[1]{\mathop{}}{\lbrack}{\rbrack}{}{#1}
\DeclarePairedDelimiterXPP{\braces}[1]{\mathop{}}{\lbrace}{\rbrace}{}{#1}

\DeclarePairedDelimiterXPP{\floor}[1]{\mathop{}}{\lfloor}{\rfloor}{}{#1}
\DeclarePairedDelimiterXPP{\ceil}[1]{\mathop{}}{\lceil}{\rceil}{}{#1}

\DeclarePairedDelimiterXPP{\intervalcc}[1]{\mathop{}}{\lbrack}{\rbrack}{}{#1}
\DeclarePairedDelimiterXPP{\intervalco}[1]{\mathop{}}{\lbrack}{\rparen}{}{#1}
\DeclarePairedDelimiterXPP{\intervaloc}[1]{\mathop{}}{\lparen}{\rbrack}{}{#1}
\DeclarePairedDelimiterXPP{\intervaloo}[1]{\mathop{}}{\lparen}{\rparen}{}{#1}

\DeclarePairedDelimiterXPP{\myset}[2]{\mathop{}}{\lbrace}{\rbrace}{}{#1\,\delimsize\vert\,\mathopen{}#2}

\DeclareDocumentCommand{\Graph}{ O{} O{} o o}{
	\IfValueTF{#3}{
	  \IfValueTF{#4}{
	  	\mathrm{Graph}^{#1}_{#2}(#3;#4)
	  }{
		\mathrm{Graph}^{#1}_{#2}(#3)
	  }
	}{
		\mathrm{Graph}^{#1}_{#2}
	}
}

\DeclareDocumentCommand{\Emb}{ O{} O{} o o}{
	\IfValueTF{#3}{
	  \IfValueTF{#4}{
	  	\mathrm{Emb}^{#1}_{#2}(#3;#4)
	  }{
		\mathrm{Emb}^{#1}_{#2}(#3)
	  }
	}{
		\mathrm{Emb}^{#1}_{#2}
	}
}

\DeclareDocumentCommand{\Sobo}{ O{} O{} o o}{
	\IfValueTF{#3}{
	  \IfValueTF{#4}{
	  	W^{#1}_{#2}(#3;#4)
	  }{
	  	W^{#1}_{#2}(#3)
	  }
	}{
	  W^{#1}_{#2}
	}
}

\DeclareDocumentCommand{\Bessel}{ O{} O{} o o}{
	\IfValueTF{#3}{
	  \IfValueTF{#4}{
	  	H^{#1}_{#2}(#3;#4)
	  }{
	  	H^{#1}_{#2}(#3)
	  }
	}{
	  H^{#1}_{#2}
	}
}

\DeclareDocumentCommand{\Holder}{ O{} O{} o o}{
	\IfValueTF{#3}{
	  \IfValueTF{#4}{
	  	C^{#1}_{#2}(#3;#4)
	  }{
	  	C^{#1}_{#2}(#3)
	  }
	}{
	  C^{#1}_{#2}
	}
}
\DeclareDocumentCommand{\HolderC}{ O{} O{} }{\Holder[#1][#2][\Circle][\AmbSpace]}

\DeclareDocumentCommand{\Lebesgue}{ O{} O{} o o}{
	\IfValueTF{#3}{
	  \IfValueTF{#4}{
	  	L^{#1}_{#2}(#3;#4)
	  }{
	  	L^{#1}_{#2}(#3)
	  }
	}{
	  L^{#1}_{#2}
	}
}
\DeclareDocumentCommand{\LebesgueC}{ O{} O{} }{\Lebesgue[#1][#2][\Circle][\AmbSpace]}

\usepackage[capitalise,nameinlink]{cleveref}

\crefname{equation}{}{}
\newcommand{\aref}[1]{\cref{#1}}
\newtheorem{theorem}{Theorem}[section]

\newtheorem{proposition}[theorem]{Proposition}

\theoremstyle{break}

\theoremstyle{plain}
\theorembodyfont{\normalfont}
\newtheorem{definition}[theorem]{Definition}
\newtheorem{example}[theorem]{Example}
{%
    \theoremsymbol{\ensuremath{\Diamond}}%
    \newtheorem{remark}[theorem]{Remark}%
}

\theoremstyle{break}

\theoremheaderfont{\itshape}
\theorembodyfont{\upshape}
\theoremstyle{nonumberplain}
\theoremseparator{.}
\theoremsymbol{\ensuremath{\Box}}	
\newtheorem{proof}{Proof}

\title%[On a Complete Riemannian Metric on the Space of Embedded Curves]
{A Lusin theorem for nonlocal gradients}

\author[1]{Elias Döhrer}

\affil[1]{Chemnitz University of Technology}

\begin{document}

\maketitle

We extend the celebrated result of Alberti, 
stating that Borel vector fields coincide with gradients of $C^1$-functions outside of a set of arbitrary small measure.
We prove that a similar statement holds true in the setting of fractional gradients and $C^{0,s}$-functions.
Furthermore, we establish analogous results for general nonlocal gradients with appropriate function spaces.
Both statements are proven by means of the translation method, demonstrating its use for the purposes of geometric measure theory.
In particular, this article illustrates how the translation method transfers rigidity phenomena from the classical gradient to a broad class of nonlocal gradients.

\tableofcontents

\section{Introduction}
A fundamental question in geometric measure theory asks to what extent arbitrary vector fields can be realized as gradients of sufficiently regular functions. 
Alberti's celebrated theorem provides a striking answer: every Borel vector field coincides with the gradient of a $C^1$-function outside a set of arbitrarily small measure. 
This result lies at the interface of differentiability theory, geometric measure theory and the calculus of variations.
There are various refinements in the direction of more general measures~\cite{DeMasiMarchese-RefinedLusinForGradients} and extensions to higher dimensional cases~\cite{MoonensPfeffer-MultidimensionalLusin}.
\\
In this article, we extend Alberti's theorem~\cite{LusinForGradients-Alberti} to the setting of fractional and nonlocal gradients.
We consider classical fractional gradients, defined via the Riesz transform, fractional gradients with finite horizon and more generally nonlocal gradients, defined via convolution with singular kernels.
These objects have become of great interest in recent years and their use has been demonstrated in various cases.
Initially, they were developed in the context of \textit{peridynamics}, which is an approach to solid mechanics using nonlocal quantities.
This approach experienced great progress within the last decades, see~\cite{Silling-Peridynamics} for the original work and~\cite{AnotherLookArSoboSpaces-BBM,MengeshaDu-CharFuncSpacesOfVFByNonlinPeridynamics,BellidoMorraCOrral-ExNonlocVarProbinPeridynamics} for a recent mathematical exhibition.
Subsequently, in~\cite{zbMATH07208002}, the authors modified the classical approach by using Riesz fractional gradients in order to describe the model of bond based peridynamics.
It was Shieh and Spector who drew the attention of the PDE and Calculus of Variations community towards these models, cf.~\cite{ShiehSpecotr-NewFractPDE1,ShiehSpector-NewFractPDE2}.
In their seminal works, they proved that the Bessel potential spaces are in fact the spaces associated to the Riesz fractional gradients.
For $s\in (0,1), u\in C^\infty_c(\R^n)$, the Riesz fractional gradient is given by
\begin{equation}
    D^s u(x)\ceq
    c_{n,s} \int_{\R^n}
        \frac{u(y)-u(x)}{\abs{y-x}}\frac{y-x}{\abs{y-x}^{n-1+s}}
    \dd y.
\end{equation}
These core insights motivated the theory of nonlocal gradients, leading to a more detailed understanding of the spaces~\cite{ComiStefani-Asymptotics-DistrApproachToFractionalSobolevSpaces,COMI20193373} and solution theory of the according variational problems~\cite{zbMATH07439869}.
Furthermore, the nonlocal spaces have been related to the local spaces using $\Gamma$-convergence, cf.~\cite{zbMATH07296599}.
\\
In order to work in bounded domains, one introduced kernels with finite horizon, circumventing the integration over the whole space which is required by classical Riesz fraction gradients, cf.~\cite{BellidoCuetoMoraCorral-FundamentalTheoremNonlocalGradients,CuetoKreisbeckSchoenberger-VarTheoryIntegralFunctFiniteHorizon}.
This can be done by multiplying with an admissible cut-off $\omega_\delta$, supported in $B_\delta(0)$.
The fractional gradient with horizon $\delta>0$ is given by
\begin{equation}
    D^s_\delta u(x)\ceq
    c_{n,s} \int_{\R^n}
        \frac{u(y)-u(x)}{\abs{y-x}}\frac{y-x}{\abs{y-x}^{n+s}}
        \omega_\delta(y-x)
    \dd y.
\end{equation}
The main tools for calculus of variations with nonlocal gradients are the translation method and the nonlocal fundamental theorem of calculus, developed in~\cite{BellidoMorraCorralSchoenberger-NonLocGrad-FundamentalPoincareEmbeddings,CuetoKreisbeckSchoenberger-VarTheoryIntegralFunctFiniteHorizon}.
Naturally, this led to considering more general kernels, admissible kernels $\rho$, giving rise to the nonlocal gradient with respect to $\rho$
\begin{equation}
    D^\rho u(x)
    \ceq
    \int_{\R^n}
    \frac{u(y)-u(x)}{\abs{y-x}}
    \frac{y-x}{\abs{y-x}}
    \rho(y-x)
    \dd y.
\end{equation}
The theory of fractional gradients with finite horizon extends almost one to one to the case of more general kernels, as one can see in~\cite{NonlocPerimetersVariations:ExtremalityAndDecomposability-CarioniDelFrandeIglesiasSchoenberger,BellidoMorraCorralSchoenberger-NonLocGrad-FundamentalPoincareEmbeddings,CuetoKreisbeckSchoenberger-VarTheoryIntegralFunctFiniteHorizon} 
and many other works. \\
The main contribution of this article is to answer whether Alberti's result extends to the setting of nonlocal gradients affirmatively. 
We first establish a Lusin-type theorem for Riesz fractional gradients and Hölder functions. 
Using the translation method, we subsequently extend the result to a broad class of nonlocal gradients. 
We further obtain BV-versions of these results and investigate fine differentiability properties of the associated Sobolev–Slobodeckij spaces.

\subsection{Outline of the paper}
In~\cref{sec:Prelim}, we recall the basic definitions needed for nonlocal gradients and the result of Alberti.
Subsequently, we prove the different generalization of Alberti's result in~\cref{sec:LusinNonlocalGradient}.
Ultimately, we investigate fine properties of function belonging to the Sobolev-Slobodeckij spaces associated to $\rho$, namely $L^p$ and approximate differentiability.
We also draw a connection to the classical Sobolev-Slobodeckij spaces.

\section{Preliminaries}\label{sec:Prelim}
In this chapter, we clarify the general setting, starting with admissible kernels and nonlocal gradients.
Afterward, we recall the tools, which are important for our purposes.\\
For a radial function $\rho:\R^n\setminus\{0\}\rightarrow \R$, we define $\overline{\rho}:(0,\infty)\rightarrow\intervalco{0,\infty}$ by $\overline{\rho}(\abs{x})=\rho(x)$.
\begin{definition}
    Let $\rho:\R^n\setminus \{0\}\rightarrow \intervalco{0,\infty}$ be a radial function. 
    We say $\rho$ is an \textit{admissible kernel} if there are $\eta, \nu>0, 0<\sigma \leq \gamma <1$ such that $\rho$ satisfies the following properties.
    \begin{itemize}
        \item[(H0)] $\inf_{\overline{B_\eta(0)}}\rho >0$ and $\pars*{\rho(\cdot )\min(1, \frac{1}{\abs{\cdot}})}\in L^1(\R^n)$
        \item[(H1)] the maps $f_\rho:r\mapsto r^{n-2} \overline{\rho}(r)$ and $g_{\rho}:r\mapsto r^{\nu+n-2} \overline{\rho}(r)$ are decreasing on $(0,\eta)$
        \item[(H2)] $f_\rho \in C^\infty((0,\infty))$ and for all $k\in \N$, there is $C(k)>0$ such that
        \[
            \abs{\frac{\dd^k}{\dd r^k}f(r)}\leq C(k)r^k f_\rho(r)
        \]
        \item[(H3)] $r\mapsto r^{n+\sigma-1}\overline{ \rho}(r)$ is almost decreasing on $(0,\eta)$
        \item[(H4)] $r\mapsto r^{n+\gamma-1}\overline{\rho}(r)$ is almost increasing on $(0,\eta)$
    \end{itemize}
We denote the set of all admissible kernels by $\mathcal{A}$.
For $\rho\in \mathcal{A}, u\in C^\infty_c(\R^n)$, we define the \textit{nonlocal gradient with kernel $\rho$ of u} as 
\begin{equation}\label{eqn:DefNonLocGradRho}
    D^\rho u(x)\ceq 
        \int_{\R^n}
            \frac{u(y)-u(x)}{\abs{y-x}} \frac{y-x}{\abs{y-x}}\rho(y-x)
        \dd y.
\end{equation}
\end{definition}
Note that by assuming (H2), the properties (H3) and (H4) are satisfied everywhere on $(0,\eta)$. 
\begin{example}
    We provide an incomplete list of examples of admissible kernels.
    For $s\in (0,1)$, let $\rho^s\ceq c_{n,s} \frac{1}{\abs{\cdot}^{n+s-1}}\in \mathcal{A}$, where $c_{n,s}$ is a normalization constant.
    Then $D^s\ceq D^{\rho^s}$ is the well-known Riesz-fractional gradient.\\
    In fact, if $s\in C^\infty(\intervalco{0,\infty}, (0,1))$, with $0<\inf s(x) \leq \sup s(x)<1$ and bounded derivatives, then 
    \[
        \rho(x)\ceq \frac{1}{\abs{x}^{n+s(\abs{x})-1}}
    \] is an admissible kernel.
    Furthermore, $\mathcal{A}$ admits a cone structure, i.e.\ for $\rho_1, \rho_2\in \mathcal{A}, \lambda_1,\lambda_2 >0$, one observes $\lambda_1 \rho_1+\lambda_2 \rho_2 \in \mathcal{A}$.
\end{example}

Additionally, we define admissible cut-offs for kernels $\rho\in \mathcal{A}$, giving notion to fractional gradients with finite horizons.
The following conditions have been given in~\cite{CuetoKreisbeckSchoenberger-VarTheoryIntegralFunctFiniteHorizon}.
\begin{definition}
    Let $\delta>0$ be given.
    A function $\omega_\delta :\R^n \rightarrow \R$ is an \textit{admissible cut-off with horizon $\delta$} if it satisfies the following properties.
    \begin{itemize}
        \item[(i)] $\omega_\delta$ is radial, i.e.\ there exists $\overline{\omega}_\delta :\intervalco{0,\infty}\rightarrow \R$ such that $\omega_\delta(x)= \overline{\omega}_\delta(\abs{x})$
        \item[(ii)] $\omega_\delta \in C^\infty_c (B_\delta(0))$
        \item[(iii)] $\exists b_0 \in (0,1)$ such that $\omega_\delta(x)=1$ for all $x\in B_{b_0\delta}(0)$
        \item[(iv)] $\omega_\delta$ is radially decreasing, i.e. $\overline{\omega}_\delta(r_1)\geq \omega_\delta(r_2)$ for all $0\leq r_1 \leq r_2 <\infty$
    \end{itemize}
    The set of admissible cut-offs with horizon $\delta$ will be denoted by $\mathcal{C}_\delta$.\\
    For $\omega_\delta \in \mathcal{C}_\delta, s\in (0,1)$, we define the \textit{fractional gradient with horizon $\delta$} of $u\in C^\infty_c$ as
    \begin{equation}
        D^s_\delta u(x)
        \ceq 
        c_{n,s} \int_{\R^n}\frac{u(y)-u(x)}{\abs{y-x}^{n+s}}\frac{y-x}{\abs{y-x}}\omega_\delta(y-x) \dd y
        .
    \end{equation}
\end{definition}
Naturally, one defines the according vector operations.
For $\rho\in \mathcal{A}, \varphi\in C^\infty_c(\R^n, \R^n)$, we define the \textit{nonlocal divergence with kernel $\rho$} as 
\begin{equation}
    \text{div}^\rho \varphi(x) \ceq \int_{\R^n}\inner{ \frac{\varphi(y)-\varphi(x)}{\abs{y-x}}, \frac{y-x}{\abs{y-x}}}\rho(y-x)\dd y
    .
\end{equation}
If $\rho=\rho^s=c_{n,s} \frac{1}{\abs{\cdot }^{n+s-1}}$, we abbreviate $\text{div}^s\varphi(x)= \text{div}^{\rho^s} \varphi(x)$.
Recall, that for $f:\R^n\rightarrow \R^n$, the curl is defined as
\[
    \mathrm{curl}(f)=( \partial_j f_i - \partial_i f_j)_{i,j=1}^n
    =
    ( (\nabla f_i)_j - (\nabla f_j)_i)_{i,j=1}^n
.
\]
This definition naturally extends to the nonlocal and fractional setting, by 
\[
    \mathrm{curl}^\rho(f)
    =
    ( (D^\rho f_i)_j - (D^\rho f_j)_i)_{i,j=1}^n
    .
\]
In the case $n=3$ and $\rho(\cdot)= c_{3,s,\delta} \frac{w_\delta}{\abs{\cdot}^{n+s-1}}$, a short computation reveals that our definition of $ \mathrm{curl}^\rho$ is equivalent to the definition given in~\cite{BellidoCuetoFossRadu-NonLocalGreenHelmholtz}.
Having collected all the above definitions, we can define the fractional Sobolev spaces, cf.~\cite{BellidoMorraCorralSchoenberger-NonLocGrad-FundamentalPoincareEmbeddings,COMI20193373,ComiStefani-Asymptotics-DistrApproachToFractionalSobolevSpaces} for a more detailed exposition and a distributional approach.
\begin{definition}
    For $\rho\in \mathcal{A}$, $p\in \intervalcc{1,\infty}$, we define 
    \[
        H^{\rho,p}(\R^n)\ceq \{ u\in L^p(\R^n): D^\rho u\in L^p(\R^n)\}
    \]
    and equip it with the norm $\norm{u}_{H^{\rho,p}(\R^n)} \ceq \norm{u}_{L^p(\R^n)}+ \norm{D^\rho u}_{L^p(\R^n)}$.
    Furthermore, for a domain $\Omega\subset\R^n$ with Lipschitz boundary, we define
    \[
        H^{\rho,p}_0(\Omega)\ceq
        \{
            u\in H^{\rho,p}(\R^n): u=0 \text{ almost everywhere in }\Omega^c    
        \}
        .
    \]
    For $\rho\in \mathcal{A}$ with finite horizon $\delta>0$, let 
    \[
        H^{\rho,p,\delta}(\Omega)\ceq
        \{
            u\in L^p(\Omega_\delta): D^{\rho}_{\delta} u \in L^p(\Omega)  
        \}
        ,
        \text{ where }
        \Omega_\delta\ceq \Omega +B_\delta(0)
    .
    \]
    If $\rho= \rho^s$ or $\rho= \rho^s \omega_\delta$, for $\omega_\delta \in \mathcal{C}_\delta$, 
    the nonlocal Sobolev space over $\R^n$ coincides with the Bessel potential space~\cite{CuetoKreisbeckSchoenberger-VarTheoryIntegralFunctFiniteHorizon}.
    Hence, we abbreviate
    $H^{s,p}\ceq H^{\rho^s,p}$ and $H^{s,p,\delta}\ceq H^{\rho^s \omega_\delta, p,\delta}$.
\end{definition}
Using Fourier analysis, one can show that 
\[
    H^{\rho,p}_0(\Omega)= 
    \overline{C^\infty_c(\Omega)}^{\norm{\cdot}_{H^{\rho,p}(\Omega)}} 
    \text{ for }
    1<p<\infty.
\]
For a detailed exposition of the spaces $H^{\rho,p,\delta}(\Omega)$, $H^{s,p}(\Omega)$ and $H^{s,p,\delta}(\Omega)$, we refer to~\cite{BellidoMorraCorralSchoenberger-NonLocGrad-FundamentalPoincareEmbeddings,CuetoKreisbeckSchoenberger-VarTheoryIntegralFunctFiniteHorizon,NonConstFunctionWithZeroNonLocGradient-KreisbeckSchoenberger}.
Note, that for $\rho\in \mathcal{A}$, one can also define the Sobolev--Slobodeckij space w.r.t $\rho$.
For $p\in \intervalcc{1,\infty}$, these spaces can be defined as
\[
    W^{\rho,p}(\Omega)\ceq 
    \{
        u\in L^p(\Omega): 
        \norm*{
            \frac{(u(y)-u(x))\rho(x-y)}{\abs{y-x}^{1-n}}
        }_{L^p(\Omega\times \Omega, \frac{\dd x \dd y}{\abs{y-x}^n})}<\infty
    \}
.
\]
We define 
\[
    \seminorm{u}_{W^{\rho,p}(\Omega)}\ceq \norm*{
            \frac{(u(y)-u(x))\rho(x-y)}{\abs{y-x}^{1-n}}
        }_{L^p(\Omega\times \Omega, \frac{\dd x \dd y}{\abs{y-x}^n})}
\]
Subsequently, $\norm{u}_{W^{\rho,p}(\Omega)}\ceq \pars*{\norm{u}_{L^p(\Omega)^p}+\seminorm{u}_{W^{\rho,p}(\Omega)}^p}^{1/p}$.
In the case $p=1$, this definition coincides with the one given in~\cite[§2]{NonlocPerimetersVariations:ExtremalityAndDecomposability-CarioniDelFrandeIglesiasSchoenberger}.
A more detailed exhibition of the corresponding $BV$ and Sobolev--Slobodeckij spaces is given in~\cite{NonlocPerimetersVariations:ExtremalityAndDecomposability-CarioniDelFrandeIglesiasSchoenberger} and~\cite{ComiStefani-Asymptotics-DistrApproachToFractionalSobolevSpaces}.
\\
As mentioned in the introduction, one of the main tools is the so-called translation method.
In the case of Riesz fractional gradients, one can ``decompose'' the ordinary gradients into a fractional gradient and the fractional Laplacian.
Shieh and Spector~\cite{ShiehSpecotr-NewFractPDE1,ShiehSpector-NewFractPDE2} have characterized the fractional gradient as a composition of the ordinary gradient after the convolution with the Riesz potential kernel $\mathcal{I}_{1-s} \ceq c_{n,s} \abs{\cdot}^{1-(n+s)}$.
Note that $\mathcal{I}_{s} \ast u = (-\Delta)^{-s/2} u$ for all $u\in C^\infty_c(\R^n)$.
The preceding facts can be summarized as
\begin{equation}
    \nabla u= D^s \circ (-\Delta)^{\frac{1-s}{2}} u
    \text{ and }
    D^s u= \nabla \circ \mathcal{I}_{1-s} \ast u= \mathcal{I}_{1-s} \ast D u
    \text{ for all }
    u\in C^\infty_c(\R^n)
    .
\end{equation}
Subsequently, this translation from the local to the nonlocal setting and vice versa was extended to nonlocal gradients.
For $\rho \in \mathcal{A}$ and $u\in C^\infty_c$, we define
\begin{align*}
    &Q_\rho(x)\ceq \int_{\abs{x}}^\infty \frac{\overline{\rho}(t)}{t}\dd t
    \text{ and }
    \mathcal{Q}_\rho u= Q_\rho \ast u
    .
\end{align*}
As proven in~\cite{BellidoMorraCorralSchoenberger-NonLocGrad-FundamentalPoincareEmbeddings}, the operator $\mathcal{Q}_\rho$ extends to an operator $H^{\rho,p}(\R^n)\rightarrow H^{1,p}(\R^n), p\in (1,\infty)$, and satisfies
\begin{equation}
    D^\rho \phi= \mathcal{Q}_\rho \nabla \phi= \nabla \mathcal{Q}_\rho \phi
    \text{ for all }
    \phi \in C^\infty_c
    .
\end{equation}
In pursuit of defining an inverse to $\mathcal{Q}_\rho$, they defined 
\begin{equation}\label{eq:Intro-PRho}
        P_\rho= (1/ \widehat{Q_\rho})^{\vee}
        \text{ and } 
        \mathcal{P}_\rho u= P_\rho \ast u,
\end{equation}
where $\widehat{\cdot}$ and $\cdot^{\vee}$ denote the Fourier transform and the inverse Fourier transform~\cite{GammaConvNonLocGrad-CuetoKreisbeckSchoenberger}.
$\mathcal{P_\rho}$ satisfies $D\phi= \mathcal{P}_\rho D^\rho \phi= D^\rho \mathcal{P}_\rho \phi$.
In the same work, the authors showed that $\mathcal{P}_\rho$ extends to an operator $H^{1,p}(\R^n)\rightarrow H^{\rho,p}(\R^n)$ for $1<p<\infty$.
The translation method motivated a nonlocal fundamental theorem of calculus, where one finds a vector-valued kernel $V_\rho$ such that $u= V_\rho \ast D^\rho u$, cf.~\cite{BellidoCuetoMoraCorral-FundamentalTheoremNonlocalGradients}.
For finite horizon kernels, this method extends to bounded domains, giving rise to a nonlocal Green theorem, cf.~\cite{BellidoCuetoFossRadu-NonLocalGreenHelmholtz}
--- at the expense of counterintuitive fact that there are nonconstant functions with zero nonlocal gradient, see~\cite{NonConstFunctionWithZeroNonLocGradient-KreisbeckSchoenberger}.
\\
Exploiting the above translation mechanism, one verifies that for $\varphi \in C^\infty_c(\R^n, \R^n)$
\begin{equation}\label{eq:divRhoVectorCalc}
    \mathrm{div}^\rho \varphi
    =
    \trace( D^\rho \varphi)
    =
    \trace( \nabla (Q_\rho \ast \varphi))
    =
    \trace(Q_\rho \ast \nabla \varphi)
    =
    Q_\rho \ast \mathrm{div}(\varphi).
\end{equation}
Additionally, using the fundamental theorem of nonlocal calculus,~\cite[Proposition 4.6]{NonlocPerimetersVariations:ExtremalityAndDecomposability-CarioniDelFrandeIglesiasSchoenberger} establishes that
\begin{equation}\label{eq:NonlocVC-divRhoVRho}
    \mathrm{div}^\rho (V_\rho \ast \varphi)= \varphi \text{ for all } \varphi \in C^\infty_c(\R^n).
\end{equation}
\noindent
We conclude this section by recalling the celebrated result of Alberti from~\cite{LusinForGradients-Alberti}.
For a Borel measurable set $A\subset \R^n$, we denote the Lebesgue measure of $A$ by $\abs{A}$.
\begin{theorem}[{\cite%[Theorem 1]
    {LusinForGradients-Alberti}}]
\label{thm:Alberti-Lusin4Grad}
    Let $\Omega\subset \R^n$ be an open set of finite measure and $f:\Omega \rightarrow \R^n$ a Borel function.
    Then for any $\eps>0$, there is an open set $A\subset \Omega$ and $u\in C^{1}_0(\Omega)$ such that
    \begin{align*}
        &\abs{ A}\leq \eps \abs{\Omega},
        \\
        &f= \nabla u \text{ in } \Omega \setminus A,\\
        &\norm{\nabla u}_{L^p} \leq c(n) \eps ^{1/p-1} \norm{f}_{L^p}
        \text{ for all }p\in \intervalcc{1,\infty}.
    \end{align*}
\end{theorem}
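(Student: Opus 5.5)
The statement is Alberti's theorem, which the paper quotes from~\cite{LusinForGradients-Alberti} without proof. I will therefore sketch Alberti's original argument: first build functions with prescribed gradient on most of a cube, then sum them in an iteration.

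\textbf{Step 1 (constant vector on a cube).} Take a closed cube $Q$ and a vector $a\in\R^n$. I would construct $v\in C^1_c(\mathrm{int}\,Q)$ with $\nabla v=a$ on a compact set $K\subset Q$ with $\abs{Q\setminus K}\le\eps\abs{Q}$, and with $\norm{\nabla v}_{L^p(Q)}\le c(n)\eps^{1/p-1}\abs{a}\abs{Q}^{1/p}$ for all $p\in\intervalcc{1,\infty}$. The construction is $v(x)=\inner{a,x-x_Q}\psi(x)$, where $\psi$ is a cut-off on the concentric subcube $K$ with $\abs{Q\setminus K}=\eps\abs{Q}$. Then the gap between $K$ and $\partial Q$ has width $\sim\eps\ell(Q)$.

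This naive choice fails, because $\abs{\nabla\psi}\sim(\eps\ell)^{-1}$ while $\abs{\inner{a,x-x_Q}}\sim\abs{a}\ell$, which gives $\abs{\nabla v}\sim\abs{a}/\eps$ on the gap. The fix is to subdivide $Q$ into $N^n$ small cubes of side $\ell/N$ and apply the construction on each, recentred at its own centre. On each small cube, $\abs{v}\le\abs{a}\ell/N$ and $\abs{\nabla v}\lesssim\abs{a}/\eps$ on a gap of relative measure $\eps$.

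The total then satisfies $\norm{\nabla v}_{L^p}^p\lesssim\abs{a}^p\abs{Q}\pars*{1+\eps\cdot\eps^{-p}}$, so $\norm{\nabla v}_{L^p}\lesssim\eps^{1/p-1}\abs{a}\abs{Q}^{1/p}$, exactly the claimed scaling. Moreover $\norm{v}_\infty\le\abs{a}\ell/N$ can be made as small as we like by taking $N$ large. For a piecewise constant $f$ on finitely many disjoint cubes, I would apply this cube by cube.

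\textbf{Step 2 (iteration).} Let $f$ be Borel with $\norm{f}_{L^p}<\infty$ (if $\norm{f}_{L^p}=\infty$ the norm estimate is vacuous). By Lusin's theorem and density, I pick continuous, then piecewise-constant, approximants and write $f=\sum_k g_k$ on most of $\Omega$, where each $g_k$ is piecewise constant on cubes and $\norm{g_k}_{L^p}\le 2^{-k}\norm{f}_{L^p}$ for $k\ge1$. I apply Step 1 to $g_0$ with parameter $\eps/2$, and to $g_k$ with parameter $\eps_k=\eps 2^{-k-1}$. At stage $k$ one must correct for the error $f-\nabla(u_0+\dots+u_{k-1})$, which lives on the exceptional set, and the sizes must be balanced so that $\sum_k\eps_k^{1/p-1}\norm{g_k}_{L^p}$ stays comparable to $\eps^{1/p-1}\norm{f}_{L^p}$.

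Alberti's actual scheme uses the residual $f_k=f-\nabla u_{<k}$ restricted to good sets, where it is small in $L^\infty$. This gives summable $C^0$ norms of the $\nabla u_k$ on the good part, while the $C^1$ convergence comes from $\norm{u_k}_\infty$ being tiny, forced by the subdivision in Step 1, together with uniform continuity. The exceptional set $A=\bigcup_k A_k$ is taken open, has measure at most $\sum_k\eps_k\abs{\Omega}\le\eps\abs{\Omega}$, and $u=\sum_k u_k\in C^1_0(\Omega)$.

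\textbf{Main obstacle.} The crux is making $\sum_k u_k$ converge in $C^1$ while keeping the sharp factor $\eps^{1/p-1}$ uniformly in $p$. The difficulty is that $\nabla u_k$ is large (of order $\eps_k^{-1}$) on the bad sets, and the $\eps_k$ shrink. To handle this I would choose the $g_k$ so that $\norm{g_k}_\infty$ decays much faster than $\eps_k^{-1}$ grows; this is possible because $f$ can be approximated in $L^\infty$ off small sets by Lusin's theorem. The $L^p$ bound then follows by summing the geometric series. The first stage dominates and yields $c(n)\eps^{1/p-1}\norm{f}_{L^p}$.
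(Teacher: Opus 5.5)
The paper gives no proof of this statement; it only quotes Alberti. Your skeleton is essentially Alberti's: on each cube take $\inner{a,x-x_Q}\psi$ with a cut-off of gap width $\sim\eps\ell$, then iterate so that the residual is small in sup norm on the good set and $\sum_k u_k$ converges in $C^1$. As written, however, it does not secure the feature that makes the statement nontrivial: a \emph{single} $u$ must satisfy $\norm{\nabla u}_{L^p}\le c(n)\eps^{1/p-1}\norm{f}_{L^p}$ for every $p\in\intervalcc{1,\infty}$ at once.

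\textbf{The gap: the first-stage approximant.} Your closing claim that ``the first stage dominates'' requires $\norm{g_0}_{L^p}\le C\norm{f}_{L^p}$ for all $p$ simultaneously, and you never impose or prove this. ``Lusin, then continuous, then piecewise-constant approximants'' does not give it. A continuous extension of $f|_F$, or its values at cube centres, can be far larger than $f$ on the small set $\Omega\setminus F$. Nor can you tune the choice to one $p$, as your phrase ``let $f$ be Borel with $\norm{f}_{L^p}<\infty$'' suggests.

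The missing idea is to let $a_Q$ be the \emph{mean value} of $f$ on each cube $Q$ of a fine grid in $\Omega$ (at later stages, the mean value of the residual). Then:
\begin{itemize}
\item Jensen's inequality gives $\sum_Q\abs{a_Q}^p\abs{Q}\le\norm{f}_{L^p}^p$ for every $p$, and $\sup_Q\abs{a_Q}\le\norm{f}_{L^\infty}$.
\item Once the grid is fine enough, $\abs{f-a_Q}\le\eta$ outside a set of small measure, by Lebesgue differentiation along dyadic cubes together with Egorov.
\end{itemize}
If $f\notin L^1(\Omega)$, then $f\notin L^p$ for every $p$ because $\abs{\Omega}<\infty$. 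The estimate is then vacuous, and you can apply the integrable case to $f\chi_{\{\abs{f}\le M\}}$ with $M$ large.

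\textbf{The tail bookkeeping.} Your first choice, $\norm{g_k}_{L^p}\le 2^{-k}\norm{f}_{L^p}$ with $\eps_k=\eps 2^{-k-1}$, really does fail. Indeed $\eps_k^{1/p-1}2^{-k}=\eps^{1/p-1}2^{(k+1)(1-1/p)-k}$, whose sum over $k$ grows like $p$ and diverges at $p=\infty$. Your sup-norm remedy is the right one, but you should say why it is uniform in $p$. For $\eps\le 1$ (larger $\eps$ is trivial) you have $\eps_k^{1/p-1}\le\eps_k^{-1}$ and $\norm{g_k}_{L^p}\le\max\{1,\abs{\Omega}\}\norm{g_k}_{L^\infty}$. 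Hölder gives $\eps^{1/p-1}\norm{f}_{L^p}\ge\min\{1,\abs{\Omega}^{-1}\}\norm{f}_{L^1}$. So it suffices to make $\sum_{k\ge1}\eps_k^{-1}\norm{g_k}_{L^\infty}$ small compared with $\min\{1,\abs{\Omega}^{-1}\}\norm{f}_{L^1}/\max\{1,\abs{\Omega}\}$. This is one place where $\abs{\Omega}<\infty$ genuinely enters.

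\textbf{The role of subdivision.} Your claim that the unsubdivided construction ``fails'' is a misdiagnosis. A gradient of size $\abs{\nabla v}\lesssim\abs{a}/\eps$ on a gap of measure $\eps\abs{Q}$ gives exactly $\eps^{1/p-1}\abs{a}\abs{Q}^{1/p}$, and subdividing leaves this bound unchanged. Subdivision is needed only to make $\norm{v}_\infty$ small, which the $C^0$ part of the convergence of $\sum_k u_k$ requires, and to make $f$ nearly constant on each cube.
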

\begin{remark}\label{rmk:Alberti-C1c}
    Since $\Omega$ is open and has finite measure, we may approximate the domain from the inside by sets $U_n$ having compact closure in $\Omega$.
    Hence, we may assume that $u\in C^1_c(\Omega)$.
    This has been proven in~\cite[Corollary 1.2]{MoonensPfeffer-MultidimensionalLusin}.
\end{remark}    
Furthermore, in~\cite[Remark 2]{LusinForGradients-Alberti}, Alberti states that for $p=1$, one can drop the assumption $\abs{\Omega}<\infty$.
The above theorem has been generalized in various ways.
Firstly, Moonens and Pfeffer extended the whole framework to charges and higher dimensions in~\cite{MoonensPfeffer-MultidimensionalLusin,zbMATH02023430}.
More recently, De Masi and Marchese proved a refined version of the above for varifolds, cf.~\cite{DeMasiMarchese-RefinedLusinForGradients}.
\section{Lusin for nonlocal gradients}\label{sec:LusinNonlocalGradient}
In this section, we extend Alberti's theorem to nonlocal gradients.
The general approach is to use \aref{thm:Alberti-Lusin4Grad} and the translation method as often as possible.
It is crucial to ensure that all occurring quantities are well-defined.
Throughout this section, we fix 
\[
    s\in (0,1).
\]
\begin{theorem}\label{thm:LusinFractGradient}
    Let $\Omega\subset \R^n$ be an open set with finite measure and $f:\Omega\rightarrow \R^n$ a Borel vector field.
    Then, for all $\eps>0$ there is an open $A\subset \Omega$ and $v\in C^{0,s}(\R^n, \R)$ such that
    \begin{equation}  
        \begin{split}
        &\abs{A}\leq \eps \abs{\Omega},\\
        &f= D^s v \;\text{ in }\Omega \setminus A,\\
        &\norm{D^s v}_{L^p(\R^n)}\leq 
        c(n)
        \eps^{1/p-1} \norm{f}_{L^p(\Omega)} 
        \text{ for all } p\in \intervalcc{1,\infty}.
        \end{split}
    \end{equation}
\end{theorem}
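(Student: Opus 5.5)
Apply \cref{thm:Alberti-Lusin4Grad} in the form of \cref{rmk:Alberti-C1c}, then move from the classical gradient to $D^s$ with the translation identity $\nabla u = D^s\circ(-\Delta)^{\frac{1-s}{2}}u$. Given $\eps>0$, we first obtain an open $A\subset\Omega$ with $\abs{A}\leq\eps\abs{\Omega}$ and $u\in C^1_c(\Omega)$ such that $\nabla u=f$ on $\Omega\setminus A$ and $\norm{\nabla u}_{L^p}\leq c(n)\eps^{1/p-1}\norm{f}_{L^p(\Omega)}$ for all $p\in\intervalcc{1,\infty}$. The candidate is
\[
v\ceq (-\Delta)^{\frac{1-s}{2}}u .
\]
For $u\in C^1_c(\R^n)$ we interpret it through the singular-integral representation
\[
v(x)=c\int_{\R^n}\frac{u(x)-u(y)}{\abs{x-y}^{n+1-s}}\dd y,
\]
which converges absolutely because $1-s<1$. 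The plan is then to show two things: $D^s v=\nabla u$ pointwise on $\R^n$, and $v\in C^{0,s}(\R^n)$. Once these hold, $D^s v=f$ on $\Omega\setminus A$, and the $L^p$ bound is exactly Alberti's estimate, since $D^s v=\nabla u$ globally and $\nabla u$ vanishes off $\Omega$.

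\textbf{Step 1 (identity).} The relation $D^s(-\Delta)^{\frac{1-s}{2}}\phi=\nabla\phi$ is known for $\phi\in C^\infty_c$. To pass to $u$, mollify: $u_k\ceq u\ast\eta_k\in C^\infty_c$. Then $\nabla u_k\to\nabla u$ uniformly with supports in a fixed compact set, $v_k\to v$ locally uniformly (estimate the singular integral by splitting into $\abs{x-y}<1$, using the uniform Lipschitz bound, and $\abs{x-y}\geq1$, using the uniform sup bound), and $v_k,v=O(\abs{x}^{-(n+1-s)})$ at infinity uniformly in $k$. A more robust route is to write $D^s v=\mathcal{I}_{1-s}\ast D v$ in the distributional sense, or to use the Fourier symbols $i\xi\abs{\xi}^{s-1}\cdot\abs{\xi}^{1-s}=i\xi$. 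This identifies $D^s v=\nabla u$ as distributions. Pointwise equality everywhere then follows once $D^s v$ is shown to be defined by an absolutely convergent integral and continuous.

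\textbf{Step 2 (H\"older regularity).} We need $v\in C^{0,s}$. Note $v=\mathcal{I}_s\ast(-\Delta)^{1/2}u=\mathcal{I}_s\ast(R\cdot\nabla u)$, where $R$ is the Riesz transform. Since $\nabla u$ is continuous and compactly supported, $\nabla u\in L^\infty$, but $R$ does not preserve $L^\infty$. So estimate directly instead. Write $\nabla u=g\in C_c$ and $v=c\int \nabla u(y)\cdot\frac{x-y}{\abs{x-y}^{n+1-s}}\dd y$ up to constants (this is $\mathcal{I}_{1-s}$-type integration by parts: $(-\Delta)^{\frac{1-s}{2}}u=\mathrm{div}(\nabla\mathcal{I}_{s}\ast u)\sim K\ast\nabla u$ with $K(z)=c\,z\abs{z}^{-(n+1-s)}$, homogeneous of degree $s-n$). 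Then
\[
\abs{v(x)-v(x')}\leq\norm{g}_\infty\int\abs{K(x-y)-K(x'-y)}\dd y\lesssim\norm{g}_\infty\abs{x-x'}^s,
\]
by the standard splitting $\abs{x-y}<2\abs{x-x'}$ (each term integrable, giving $\abs{x-x'}^s$) and $\abs{x-y}\geq2\abs{x-x'}$ (gradient bound $\abs{\nabla K}\lesssim\abs{z}^{s-n-1}$, giving $\abs{x-x'}\cdot\abs{x-x'}^{s-1}$). Boundedness of $v$ follows from the compact support of $g$ together with local integrability of $K$.

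\textbf{Main obstacle.} The hardest part is the rigorous justification that $D^s v$, defined by the principal-value integral in the paper's formula for $D^s$, equals $\nabla u$ pointwise for a function $u$ that is merely $C^1_c$ rather than smooth. This matters because $v$ is only $C^{0,s}$, which is borderline for the convergence of $D^s$ near the diagonal. I would handle it by writing $D^s v=\mathcal{I}_{1-s}\ast\nabla v$ distributionally, using $\widehat{D^s}=i\xi\abs{\xi}^{s-1}$ on tempered distributions to obtain $D^s v=\nabla u$ in $\mathcal{S}'$, and interpreting the equality $f=D^s v$ on $\Omega\setminus A$ for this (continuous) representative. If needed, I would shrink $A$ slightly by a null set.
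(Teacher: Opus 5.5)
Your proposal is correct and follows the paper's route: apply Alberti's theorem to get $u\in C^1_c(\Omega)$, set $v\ceq(-\Delta)^{\frac{1-s}{2}}u$ so that $D^s v=\nabla u$, and inherit the $L^p$ bound directly from $\nabla u$. The differences are minor: you prove $v\in C^{0,s}$ by hand via the kernel $K(z)=c\,z\abs{z}^{-(n+1-s)}$ where the paper cites Silvestre's Proposition~2.5 (your kernel is right, but it comes from the identity $\mathrm{div}_z(z\abs{z}^{-(n+1-s)})=(s-1)\abs{z}^{-(n+1-s)}$ plus an integration by parts in the singular integral, not from $\mathrm{div}(\nabla\mathcal{I}_{s}\ast u)$, which equals $-(-\Delta)^{1-\frac{s}{2}}u$), and you justify $D^s v=\nabla u$ weakly by duality/Fourier rather than through the Comi--Stefani extension to $\mathrm{Lip}_c$, which is actually the more appropriate justification since $v$ itself is not compactly supported.
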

\begin{proof}
    For given $\eps>0$, we apply~\cite{LusinForGradients-Alberti} and \aref{rmk:Alberti-C1c} in order to find $u\in C^1_c(\Omega)$ and an open set $A\subset \R^n$ such that
    \begin{align*}
        &\abs{A}\leq \eps \abs{\Omega},\\
        &f= \nabla u \text{ in }\Omega \setminus A,\\
        &\norm{\nabla u}_{L^p}\leq c(n) \eps^{1/p-1} \norm{f}_{L^p}.
    \end{align*}
    We extend $u$ by $0$ from $\Omega$ to $\R^n$.
    Since $u\in C^1_c$, one concludes that the extension is in $W^{1,\infty}_c =\text{Lip}_c$.
    By~\cite{ShiehSpecotr-NewFractPDE1,ShiehSpector-NewFractPDE2}, we can decompose $D^s=\nabla\circ \mathcal{I}_{1-s}$ on $C^\infty_c$.
    Using~\cite[Proposition 2.1]{ComiStefani-Asymptotics-DistrApproachToFractionalSobolevSpaces}, this extends to $\mathrm{Lip}_c(\R^n)$.
    Hence, we can apply it to $u$ and observe 
    \[
        \nabla u
        =
        \nabla \circ \mathcal{I}_{1-s}\circ (-\Delta)^{\frac{1-s}{2}}u
        = D^s (-\Delta)^{\frac{1-s}{2}}u
        .
    \]
    Furthermore, by~\cite[Proposition 2.5]{SilvestreRegularityObstacle}, we infer that $(-\Delta)^{\frac{1-s}{2}}u \in C^{0,s}(\R^n)$.
    Letting $v\ceq (-\Delta)^{\frac{1-s}{2}}u$, we conclude that
    \[
        D^s v= D^s(-\Delta)^{\frac{1-s}{2}}u
        =\nabla u =f \text{ on }\Omega \setminus A
        \text{ and }
        \abs{A}\leq \eps \abs{\Omega}
    .
    \]
    Furthermore, we estimate
    \begin{align*}
        \norm{D ^s v}_{L^p(\R^n)}
    &=
        \norm{D^s  (-\Delta)^{\frac{1-s}{2}} u}_{L^p(\R^n)}
    =
        \norm{\nabla u}_{L^p(\Omega)}
    \leq
        c(n) \eps^{1/p -1} \norm{f}_{L^p(\Omega)}.
    \end{align*}
\end{proof}
\begin{remark}
    Contrary to~\cite[Remark 2]{LusinForGradients-Alberti}, we cannot drop the finite measure assumption on $\Omega$ for $p=1$.
    This stems from the problem that $u\in C^1_0(\Omega)$, but it is not compactly supported.
    On $\text{Lip}_{b}(\R^n)$, we can use $\nabla= D^s \circ (-\Delta)^{\frac{1-s}{2}}$.
    If we drop the claim $\abs{\Omega}<\infty$, we do not obtain compact support or global boundedness.
    Therefore, $u\in \text{Lip}_{\text{loc}}(\Omega)$.
    Since we lack information on $\partial \Omega$, we cannot modify the construction in order to guarantee $u\in \text{Lip}_b$. 
    Hence, we cannot rely on the translation method.
\end{remark}
If we consider the same kind of problem with nonlocal gradients of finite horizon, we obtain a somewhat ``more local'' version of the last result.
More precisely, we do not have to define $v$ on $\R^n$ but just on the thickened domain $\Omega_\delta \ceq \Omega + B_\delta(0)$.
In order to investigate more general kernels, we have to extend the translation method to $p=\infty$.

\begin{theorem}\label{thm:TranslationMechNonlocGrad-inftyCase}
    Let $\rho\in \mathcal{A}$ with compact support.
    The following statements hold true.
    \begin{itemize}
        \item[(a)] $\mathcal{Q}_\rho: H^{\rho, \infty}(\Omega)\rightarrow W^{1,\infty}(\Omega), u\mapsto Q_\rho \ast u$, is a bounded operator.
                    If $u\in H^{\rho,\infty}(\Omega)$ then $\nabla (\mathcal{Q}_\rho u)=D^\rho u$.
        \item[(b)] The operator $\mathcal{P}_\rho : \mathcal{S}(\R^n)\rightarrow  \mathcal{S}(\R^n), \phi \mapsto (\widehat{\phi}/\widehat{\mathcal{Q}_\rho})^{\vee}$ is well-defined and extends to a bounded linear operator $W^{1,\infty}(\R^n)\rightarrow H^{\rho,\infty}(\R^n)$ such that $\mathcal{P}_\rho=(\mathcal{Q}_\rho)^{-1}$.
                    In particular, for $u\in H^{\rho,\infty}(\R^n), v\in W^{1,\infty}(\R^n)$, one has $\nabla v= D^\rho \mathcal{P_\rho}v$ and $\mathcal{Q}_\rho \mathcal{P}_\rho v= v, \mathcal{P}_{\rho}\mathcal{Q}_\rho u=u$.
    \end{itemize}
\end{theorem}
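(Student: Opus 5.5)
We split the argument along (a) and (b) and rely on the compact support of $\rho$: if $\operatorname{supp}\rho\subset \overline{B_\delta(0)}$, then $Q_\rho(x)=\int_{\abs{x}}^\infty \overline{\rho}(t)/t\,\dd t$ vanishes for $\abs{x}\geq\delta$. By (H0) and the identity $\int_{B_\delta} Q_\rho = c_n\int_0^\delta \overline{\rho}(t)t^{n-1}\dd t$ (Fubini), we get $Q_\rho\in L^1(\R^n)$ with compact support.

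\emph{Part (a).} The convolution $Q_\rho\ast u$ is well-defined for $u\in L^\infty$, and Young's inequality gives $\norm{Q_\rho\ast u}_{L^\infty}\leq \norm{Q_\rho}_{L^1}\norm{u}_{L^\infty}$. On $\Omega$ we use the compact support, so only values of $u$ in $\Omega_\delta$ enter, consistent with the finite-horizon definition. To identify the gradient, we test against $\phi\in C^\infty_c$. The distributional nonlocal gradient is defined by duality $\langle D^\rho u,\phi\rangle=-\langle u,\mathrm{div}^\rho\phi\rangle$. By \eqref{eq:divRhoVectorCalc}, $\mathrm{div}^\rho\phi=Q_\rho\ast\mathrm{div}\phi$, and since $Q_\rho$ is radial (even), Fubini yields
\[
\langle \nabla(Q_\rho\ast u),\phi\rangle=-\langle u, Q_\rho\ast \mathrm{div}\phi\rangle=-\langle u,\mathrm{div}^\rho\phi\rangle=\langle D^\rho u,\phi\rangle .
\]
Hence $\nabla\mathcal{Q}_\rho u=D^\rho u\in L^\infty$, and $\norm{\mathcal{Q}_\rho u}_{W^{1,\infty}}\leq C\norm{u}_{H^{\rho,\infty}}$. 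The only care needed is that all pairings are absolutely convergent, which holds because $u\in L^\infty$ and $Q_\rho\ast\mathrm{div}\phi\in L^1$.

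\emph{Part (b).} On $\mathcal S$, well-definedness requires $\widehat{Q_\rho}$ to be smooth, strictly positive, with all derivatives of $1/\widehat{Q_\rho}$ polynomially bounded. Smoothness follows from compact support. Positivity and the asymptotics $\widehat{Q_\rho}(\xi)\sim \abs{\xi}^{-1}\cdot(\text{Fourier symbol of }\rho)$ at infinity come from (H1)--(H4), as in~\cite{BellidoMorraCorralSchoenberger-NonLocGrad-FundamentalPoincareEmbeddings}; we take these from there.

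To extend to $W^{1,\infty}$, we use the structure from~\cite{BellidoMorraCorralSchoenberger-NonLocGrad-FundamentalPoincareEmbeddings,GammaConvNonLocGrad-CuetoKreisbeckSchoenberger}: $P_\rho=\delta_0+R_\rho$ with $R_\rho\in L^1$ in the $p$-independent parts, plus a derivative-type part, so that $D^\rho\mathcal{P}_\rho v=\mathcal{P}_\rho D^\rho$-type identities reduce to $\nabla v$. Concretely, the plan is to write $D^\rho\mathcal{P}_\rho v=\nabla v$ via the Fourier identity $\widehat{Q_\rho}\cdot\widehat{P_\rho}=1$ applied to $\nabla v$ in the distributional sense. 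For the $L^\infty$ bound on $\mathcal{P}_\rho v$ itself, we would split $\widehat{P_\rho}$ with a smooth cutoff $\chi$ near the origin:
\begin{itemize}
\item the low-frequency part $\chi/\widehat{Q_\rho}$ is Schwartz, hence $L^1$ convolution;
\item the high-frequency part is $(1-\chi)/\widehat{Q_\rho}$, which behaves like $\abs{\xi}^{1-\gamma}$-type symbols. We write it as $\sum_j m_j(\xi)\,i\xi_j$ with $m_j=(1-\chi)\,(-i\xi_j)/(\abs{\xi}^2\widehat{Q_\rho})$ decaying like $\abs{\xi}^{-\sigma}$ or so, and acting on $\partial_j v\in L^\infty$.
\end{itemize}

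\textbf{Main obstacle.} The hardest step is showing that the kernels $m_j^\vee$ lie in $L^1$, because Calder\'on--Zygmund theory fails at $p=\infty$. I would verify integrability via Mikhlin-type derivative bounds $\abs{\partial^\alpha m_j}\lesssim\abs{\xi}^{-\sigma'-\abs{\alpha}}$ with $\sigma'>0$, using (H2)--(H4). Such decay gives a kernel bounded by $\abs{x}^{\sigma'-n}$ near $0$ and rapidly decaying at infinity, hence $L^1$.

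Finally, $\mathcal{Q}_\rho\mathcal{P}_\rho v=v$ and $\mathcal{P}_\rho\mathcal{Q}_\rho u=u$ hold on $\mathcal S$ by the Fourier identity. We extend them by weak-$*$ density and continuity, using that both operators are convolutions with finite measures plus $L^1$ kernels, which are weak-$*$ continuous on $L^\infty$.
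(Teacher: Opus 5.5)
Your part (a) is the same as the paper's: $Q_\rho\in L^1$ with compact support, Young's inequality, and the duality identity $\mathrm{div}^\rho\phi=Q_\rho\ast\mathrm{div}\,\phi$. For (b) you take a genuinely different route.

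The paper does not build $\mathcal{P}_\rho$ on $W^{1,\infty}$ by Fourier analysis. It shows that $\mathcal{Q}_\rho\colon H^{\rho,\infty}\to W^{1,\infty}$ is bijective and then applies the Banach isomorphism theorem.
\begin{itemize}
\item Injectivity: if $\mathcal{Q}_\rho u=0$ then $D^\rho u=0$. Testing against $\mathcal{P}_\rho\psi$ gives $\nabla u=0$, so $u$ is constant, hence zero.
\item Surjectivity: the paper writes down the preimage $(\chi V_\rho)\ast\nabla w+\mathrm{div}((1-\chi)V_\rho)\ast w$, where $V_\rho$ is the kernel of the nonlocal fundamental theorem of calculus. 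The integrability of $\chi V_\rho$ and $\mathrm{div}((1-\chi)V_\rho)$ is quoted from the literature.
\end{itemize}

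Formally $\widehat{V_\rho}=-i\xi/(\abs{\xi}^2\widehat{Q_\rho})$, so your $m_j$ is exactly the high-frequency part of $\widehat{V_\rho}$. You cut $V_\rho$ in frequency where the paper cuts it in space. In both versions one $L^1$ kernel acts on $w$ and another on $\nabla w$.

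What your route buys:
\begin{itemize}
\item The bound $\norm{\mathcal{P}_\rho v}_{L^\infty}\lesssim\norm{v}_{W^{1,\infty}}$ comes out explicitly.
\item Agreement with the Schwartz-class definition is automatic.
\item No open-mapping argument is needed. Since $\widehat{Q_\rho}$ and $1/\widehat{Q_\rho}$ are smooth with polynomially bounded derivatives, $\mathcal{Q}_\rho\mathcal{P}_\rho=\mathrm{id}$ and $\mathcal{P}_\rho\mathcal{Q}_\rho=\mathrm{id}$ hold on all of $\mathcal{S}'$. Then $D^\rho\mathcal{P}_\rho v=\nabla\mathcal{Q}_\rho\mathcal{P}_\rho v=\nabla v$ follows from (a).
\end{itemize}
The price is that you must actually prove the symbol estimates $\abs{\partial^\alpha m_j(\xi)}\lesssim\abs{\xi}^{-\sigma-\abs{\alpha}}$ for large $\abs{\xi}$. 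This needs a lower bound $\widehat{Q_\rho}(\xi)\gtrsim\abs{\xi}^{\sigma-1}$ plus derivative control from (H2)--(H4). It is the same analytic input the paper outsources through the properties of $V_\rho$, and $\sigma>0$ is exactly what makes $m_j^\vee$ integrable.

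Two side claims in your write-up are wrong and should be dropped. $P_\rho$ is not $\delta_0$ plus an $L^1$ kernel, and $\mathcal{P}_\rho$ is not convolution with a finite measure. By Riemann--Lebesgue $\widehat{Q_\rho}\to 0$ at infinity, so $1/\widehat{Q_\rho}$ is unbounded. Your bullet decomposition is the correct statement.

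Also, proving $\mathcal{P}_\rho\mathcal{Q}_\rho u=u$ for $u\in H^{\rho,\infty}$ by weak-$*$ density would need Schwartz approximants with $D^\rho u_k\to D^\rho u$ weak-$*$, which requires a cut-off commutator estimate. The $\mathcal{S}'$ composition argument above avoids this.
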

\begin{proof}
    We start by proving the first statement.
    Since $\rho$ has compact support, there exists $R>1$ such that $\mathrm{spt}(\rho)\subset B_R(0)$.
    Hence, 
    \[
        \rho(\cdot) \frac{1}{R}\leq \rho(\cdot) \min(1, \frac{1}{\abs{\cdot}})\in L^1(\R^n).
    \]
    Invoking (H0), we observe that $\rho \in L^1(\R^n)$ and has compact support.
    By~\cite[Lemma 2.5]{BellidoMorraCorralSchoenberger-NonLocGrad-FundamentalPoincareEmbeddings},
    we conclude that $Q_\rho \in \Lebesgue[1](\R^n)$ has compact support.
    Let $u\in H^{\rho, \infty}(\R^n)$ be arbitrary.
    By Young's convolution inequality, one has that $\mathcal{Q}_\rho u= Q_\rho \ast u \in L^\infty(\Omega)$.
    By the virtue of~\cref{eq:divRhoVectorCalc}, we observe that for $\phi \in C^\infty_c(\R^n,\R^n)$
    \begin{equation}\label{eq:IntByPartsDivQ}
        \int_\Omega
            \mathcal{Q}_\rho u \mathrm{div}\phi 
        \dd x
        =
        \int_{\Omega}
            u ( Q_\rho \ast \mathrm{div}\phi)
        \dd x
        =
        \int_{\Omega}
            u \mathrm{div}^\rho \phi
        \dd x
        =
        -\int_{\Omega}
            \phi D^\rho u 
        \dd x
    .
    \end{equation}
    The above implies $\mathcal{Q}_\rho u \in W^{1,\infty}(\R^n)$ and $\nabla (\mathcal{Q}_\rho u)= D^\rho u$.
    Putting all the above together, we conclude that
    \[
        \norm{\mathcal{Q}_\rho u}_{W^{1,\infty}(\R^n)}
        \leq
        \norm{Q_\rho}_{\Lebesgue[1](\R^n)}\norm{u}_{\Lebesgue[\infty](\R^n)}
        +
        \norm{D^\rho u}_{\Lebesgue[\infty](\R^n)}
        .
    \]
    In particular, the operator $\mathcal{Q}_\rho :H^{\rho,\infty}(\Omega)\to W^{1,\infty}(\R^n)$ is bounded.
    \\
    We now verify the second part of the theorem.
    By~\cite[Lemma 2.6]{GammaConvNonLocGrad-CuetoKreisbeckSchoenberger}, one observes that $\widehat{Q_\rho}$ is smooth, radial and positive.
    We define
    \[
        \mathcal{P}_\rho: \mathcal{S}(\R^n)\rightarrow \mathcal{S}(\R^n),
        \quad 
        v\mapsto ( \widehat{v}/ \widehat{Q_\rho})^{\vee}.
    \]
    Therefore, $\mathcal{P}_\rho= \mathcal{Q}_\rho ^{-1}$ on the class of Schwarz functions.
    We now show that this holds on the Lipschitz functions.
    Due to part (a) of this theorem and the Banach isomorphism theorem, it suffices to show that $\mathcal{Q}_\rho$ is bijective.
    Let  $\mathcal{Q}_\rho u=0$ for $u\in H^{\rho, \infty}(\R^n)$.
    Therefore, the weak nonlocal gradient of $u$ vanishes, i.e.
    \[
        D^\rho u= \nabla  \mathcal{Q}_\rho u =0
    .
    \]
   The definition of the weak nonlocal gradient implies that $\int_{\R^n} u \mathrm{div}^\rho \phi \dd x=0$ for all $\phi \in C^\infty_c(\R^n, \R^n)$, cf.~\cite[Definition 2.3]{GammaConvNonLocGrad-CuetoKreisbeckSchoenberger}.
   By density, this extends to $\phi \in \mathcal{S}(\R^n, \R^n)$.
   Let $\psi \in C^\infty_c (\R^n)$ be arbitrary and define $\phi\ceq \mathcal{P}_\rho \psi\in \mathcal{S}(\R^n,\R^n)$.
   One computes
   \begin{align*}
        0&=
        \int_{\R^n} u\;\mathrm{div}^\rho \phi \dd x
        =
        \int_{\R^n} u \;\mathrm{div}^\rho \mathcal{P}_\rho \psi \dd x
        =
        \int_{\R^n} u\; \mathrm{div} \mathcal{Q}_\rho \mathcal{P}_\rho \psi \dd x
        =
        \int_{\R^n}u\; \mathrm{div}\psi \dd x,
   \end{align*}
   hence by the fundamental theorem of calculus of variations, we deduce that $u$ is constant almost everywhere.
   Combining this with the fact that $Q_\rho \ast u=0$, we conclude that $u=0$ almost everywhere.
   Subsequently, $\mathcal{Q}_\rho$ is injective.\\
   Next, we prove that $\mathcal{Q}_\rho$ is surjective.
   Let $w\in W^{1,\infty}(\R^n)$ be arbitrary.
   Furthermore, let $V_\rho \in C^\infty(\R^n\setminus \{0\}; \R^n) \cap L^1_{\mathrm{loc}}$ be the kernel from~\cite[Theorem 5.2]{BellidoMorraCorralSchoenberger-NonLocGrad-FundamentalPoincareEmbeddings}.
   It satisfies $V_\rho \ast D^\rho \phi= \phi $ for all $\phi \in C^\infty_c(\R^n)$.
   Let $\chi\in C^\infty_c(\R^n)$ be a radial function such that $\chi(x)=1$ for all $\abs{x}\leq 1$.
   In pursuit of finding a preimage of $w$ under $\mathcal{Q}_\rho$, we define
    \begin{equation}\label{eq:PreimageOfQRho}
        v\ceq (\chi V_\rho)\ast \nabla w + \mathrm{div}((1-\chi)V_\rho)\ast w
    .
    \end{equation}
    The straightforward approach would be to define $\tilde{v}= V_\rho\ast \nabla w$, but since $\nabla w\in L^\infty$ and $V_\rho \in L^1_{\mathrm{loc}}$, the expression, in contrast to the one in~\cref{eq:PreimageOfQRho}, is not necessarily well-defined.
    As in~\cite[Lemma 4.5]{NonlocPerimetersVariations:ExtremalityAndDecomposability-CarioniDelFrandeIglesiasSchoenberger},
    we have that $\chi V_\rho$ and $\mathrm{div}((1-\chi)V_\rho)\in L^1$.
    Therefore, by Young's convolution inequality, we conclude that $v\in L^\infty$.
    For arbitrary $\psi \in C^\infty_c(\R^n; \R^n)$, one has that $\mathrm{div}(\psi)\in C^\infty_c(\R^n)$.
    Hence, we can use~\cref{eq:NonlocVC-divRhoVRho} and compute
    \begin{align*}
        \int_{\R^n}
            v\;
            \mathrm{div}^\rho \psi
        \dd x
        &=
        \int_{\R^n}
        \pars*{
            \chi V_\rho \ast \nabla w + \mathrm{div}((1-\chi)V_\rho)\ast w
        }
        \mathrm{div}^\rho \psi
        \dd x
        \\
        &=
        -
        \int_{\R^n}
        w \;
        \mathrm{div}
        \pars*{
            V_\rho \ast \mathrm{div}^\rho \psi
        }
        \dd x
        =
        -
        \int_{\R^n}
        w \;
        \mathrm{div}^\rho
        \pars*{
            V_\rho \ast \mathrm{div} \psi
        }
        \dd x
        \\
        &=
        -
        \int_{\R^n}
            w \; \mathrm{div}\psi
        \dd x
    .
    \end{align*}
    Therefore, $v\in H^{\rho,\infty}$.
    We are left with verifying that $\mathcal{Q}_\rho (-v)=w$.
    For a test function $\phi \in C^\infty_c(\R^n)$, one computes 
    \begin{align*}
        \int_{\R^n}
            \pars*{Q_\rho \ast v} \phi
        \dd x
        &=
        -
        \int_{\R^n}
           w \;
           \mathrm{div}
           \pars*{
            \chi V_\rho \ast Q_\rho \ast \phi
            +
            (1-\chi)V_\rho \ast Q_\rho \ast \phi
           } 
        \dd x
        \\
        &=
        -\int_{\R^n}
        w\;
        \mathrm{div}\pars*{
            V_\rho \ast \mathcal{Q}_\rho \phi
        }
        \dd x
        =
        -\int_{\R^n}
        w
        \pars*{V_\rho \ast \nabla \mathcal{Q}_\rho \phi}
        \dd x
        \\
        &=
        -\int_{\R^n}
            w\;
            \pars*{V_\rho \ast D^\rho \phi}
        \dd x
        =-
        \int_{\R^n}
            w \phi 
        \dd x.
    \end{align*}
    Hence, we conclude that $\mathcal{Q}_\rho (-v)=w$, implying surjectivity of $\mathcal{Q}_\rho$.
    By the Banach isomorphism theorem, we now conclude that $\mathcal{P}_\rho= (\mathcal{Q}_\rho)^{-1}: W^{1,\infty}(\R^n)\rightarrow H^{\rho,\infty}(\R^n)$ exists and is a continuous inverse of $\mathcal{Q}_\rho$.
    Using the first statement of the theorem, we compute that for $v\in W^{1,\infty}$
    \[
        \nabla v=
        \nabla \mathcal{Q}_\rho \mathcal{P}_\rho v
        =
        D^\rho \mathcal{P}_\rho v.
    \]
    This completes the proof.
\end{proof}
\begin{remark}\label{rmk:TranslationGradInfty}
    In the context of~\cref{prop:LusinNonlocalGradient}, the above theorem is only applied to function in $W^{1,\infty}$ with compact support.
    The existence of the operator $\mathcal{P}_\rho$ on $W^{1,\infty}_{c}(\R^n)$ follows from a straightforward computation.
    In fact, for $w\in W^{1,\infty}_c$, we define $v\ceq V_\rho \ast \nabla w$.
    Since $\nabla w$ is compactly supported and $V_\rho\in L^1_{\mathrm{loc}}$, one immediately concludes that $v\in L^\infty$.
    Furthermore, by testing with $\phi\in C^\infty_c$, one observes
    \begin{align*}
        \int_{\R^n} v\; \mathrm{div}^\rho \phi \dd x
        &=
        \int_{\R^n} \nabla w (V_\rho \ast \mathrm{div}^\rho \phi)\dd x
        =
        \int_{\R^n} \nabla w \phi \dd x.
    \end{align*}
    Hence, $D^\rho w \in L^\infty$ and therefore $w\in H^{\rho,\infty}$.
    The above theorem has been proven to extend the existing theory to the case $p=\infty$.
\end{remark}

In the situation of~\cref{thm:TranslationMechNonlocGrad-inftyCase}, we can prove a Lusin\NotSure{-}type approximation result, at the expense that we end up in a nonlocal Sobolev space instead of the classical Hölder space $C^{0,s}$.
\begin{proposition}\label{prop:LusinNonlocalGradient}
    Let $\rho\in \mathcal{A}$ with compact support, $\Omega\subset \R^n$ be an open set of finite measure and $f:\Omega\rightarrow \R^n$ a Borel function.
    For all $\eps>0$, there is a $v\in H^{\rho,\infty}(\R^n)$ and an open set $A$ such that
    \begin{align*}
        &\abs{A}\leq \eps \abs{\Omega},\\
        &f= D^\rho v \text{ in }\Omega \setminus A,\\
        &\norm{D^\rho v}_{L^p(\R^n)}\leq c(n) \eps^{1/p-1} \norm{f}_{L^p(\Omega)}
        \text{ for all }p\in \intervalcc{1,\infty}
        .
    \end{align*}
\end{proposition}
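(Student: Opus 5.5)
The plan is to mirror the proof of \cref{thm:LusinFractGradient}, replacing the Riesz-potential decomposition by the $L^\infty$ translation mechanism of \cref{thm:TranslationMechNonlocGrad-inftyCase}. Given $\eps>0$, I first apply \cref{thm:Alberti-Lusin4Grad} together with \cref{rmk:Alberti-C1c}. This yields $u\in C^1_c(\Omega)$ and an open set $A\subset\Omega$ with $\abs{A}\leq\eps\abs{\Omega}$, $f=\nabla u$ on $\Omega\setminus A$, and $\norm{\nabla u}_{L^p}\leq c(n)\eps^{1/p-1}\norm{f}_{L^p}$ for all $p\in\intervalcc{1,\infty}$. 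Extending $u$ by zero gives $u\in W^{1,\infty}_c(\R^n)=\mathrm{Lip}_c(\R^n)$.

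Next I set $v\ceq\mathcal{P}_\rho u$. Since $\rho$ has compact support, \cref{thm:TranslationMechNonlocGrad-inftyCase}(b) gives $v\in H^{\rho,\infty}(\R^n)$ and $D^\rho v=\nabla u$ a.e. Alternatively, by \cref{rmk:TranslationGradInfty} one can take the explicit preimage $v=V_\rho\ast\nabla u$, which is bounded because $\nabla u$ is compactly supported and bounded while $V_\rho\in L^1_{\mathrm{loc}}$. Its weak nonlocal gradient is computed by testing against $\mathrm{div}^\rho\phi$ and using $V_\rho\ast\mathrm{div}^\rho\phi=\phi$; this is the identity underlying \cref{eq:NonlocVC-divRhoVRho}.

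With $v$ in hand, $D^\rho v=\nabla u=f$ on $\Omega\setminus A$, and $\norm{D^\rho v}_{L^p(\R^n)}=\norm{\nabla u}_{L^p(\R^n)}=\norm{\nabla u}_{L^p(\Omega)}$ because $u$ is supported in $\Omega$. The estimate then transfers directly from Alberti's bound with the same constant $c(n)$.

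The main obstacle is making sure the identity $D^\rho\mathcal{P}_\rho u=\nabla u$ holds pointwise a.e., not only in a distributional sense for a less regular class. I would verify this by the duality computation: for $\phi\in C^\infty_c(\R^n;\R^n)$, write $\int v\,\mathrm{div}^\rho\phi=\int\nabla u\cdot(V_\rho\ast\mathrm{div}^\rho\phi)$. Fubini is justified since $\nabla u\in L^\infty_c$ and $V_\rho\ast\mathrm{div}^\rho\phi$ is locally integrable. The fundamental theorem of nonlocal calculus then reduces $V_\rho\ast\mathrm{div}^\rho\phi$ to $\phi$, which identifies $D^\rho v=\nabla u$ as $L^\infty$ functions. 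A minor secondary point is checking that $v\in L^\infty$ globally and not just locally. For this I would use the splitting $\chi V_\rho$ and $\mathrm{div}((1-\chi)V_\rho)$ from \cref{eq:PreimageOfQRho}, both of which are in $L^1$; this is exactly what part (b) of \cref{thm:TranslationMechNonlocGrad-inftyCase} already provides.
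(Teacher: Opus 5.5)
Your main line (Alberti's theorem with \cref{rmk:Alberti-C1c}, extension by zero to $W^{1,\infty}_c(\R^n)$, $v=\mathcal{P}_\rho u$ via \cref{thm:TranslationMechNonlocGrad-inftyCase}(b), and transfer of the bounds through $D^\rho v=\nabla u$) is exactly the paper's proof, and it is complete as it stands. Your worry about ``pointwise a.e.\ versus distributional'' equality is not a real obstacle. Part (b) identifies $D^\rho v$ and $\nabla u$ as elements of $L^\infty$, and a distribution has at most one $L^\infty$ representative up to null sets.

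The independent duality check you sketch, however, rests on a false identity. The nonlocal fundamental theorem of calculus concerns scalar functions, $V_\rho\ast D^\rho\psi=\psi$. \cref{eq:NonlocVC-divRhoVRho} says $\mathrm{div}^\rho(V_\rho\ast\varphi)=\varphi$. Neither gives the transposed identity $V_\rho\ast\mathrm{div}^\rho\phi=\phi$ for vector fields, and that identity fails for $n\geq 2$. Take $\phi\in C^\infty_c(\R^n;\R^n)$ nonzero and divergence-free. Then $\mathrm{div}^\rho\phi=Q_\rho\ast\mathrm{div}\phi=0$ by \cref{eq:divRhoVectorCalc}, hence $V_\rho\ast\mathrm{div}^\rho\phi=0\neq\phi$.

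There is also a sign slip. Since $V_\rho$ is odd, Fubini gives $\int v\,\mathrm{div}^\rho\phi\dd x=-\int\nabla u\cdot(V_\rho\ast\mathrm{div}^\rho\phi)\dd x$.

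The repair is as follows. First integrate by parts onto $u$; this is allowed since $u$ is Lipschitz with compact support and $V_\rho\ast\mathrm{div}^\rho\phi$ is smooth. Then use $\mathrm{div}(V_\rho\ast g)=V_\rho\ast\nabla g$ with $g=Q_\rho\ast\mathrm{div}\phi$, so that $\nabla g=D^\rho(\mathrm{div}\phi)$. Apply the fundamental theorem only to the scalar $\mathrm{div}\phi\in C^\infty_c(\R^n)$:
\begin{align*}
\int_{\R^n} v\,\mathrm{div}^\rho\phi\dd x
&=-\int_{\R^n}\nabla u\cdot\bigl(V_\rho\ast\mathrm{div}^\rho\phi\bigr)\dd x
=\int_{\R^n} u\,\mathrm{div}\bigl(V_\rho\ast\mathrm{div}^\rho\phi\bigr)\dd x\\
&=\int_{\R^n} u\,\bigl(V_\rho\ast D^\rho(\mathrm{div}\phi)\bigr)\dd x
=\int_{\R^n} u\,\mathrm{div}\phi\dd x
=-\int_{\R^n}\nabla u\cdot\phi\dd x .
\end{align*}
This gives $D^\rho v=\nabla u$ with the correct sign. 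What rescues the conclusion is the pairing with the gradient $\nabla u$, which only sees the gradient part of $\phi$. Note that \cref{rmk:TranslationGradInfty} writes the same shortcut, so it should not be cited as a justification in that form.
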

\begin{proof}
    We apply~\cite[Theorem 1]{LusinForGradients-Alberti} and \aref{rmk:Alberti-C1c}  
    with $\tilde{\eps}= \eps$ and obtain $u\in C^{1}_c(\Omega)$, such that $\nabla u =f$ in $\Omega \setminus A$.
    Extending $u$ by $0$ to $\R^n$, we observe that $u\in C_c^{0,1}(\R^n)\subset W_c^{1,\infty}(\R^n)$.
    In order to proceed, we have to use the local to nonlocal direction of the translation method, first proposed in~\cite[Proposition 2.6]{BellidoMorraCorralSchoenberger-NonLocGrad-FundamentalPoincareEmbeddings}.
    The general idea is, once again, to use the splitting
    \[
        \nabla u
        =
        D^\rho
        \mathcal{P}_\rho u
    ,
    \]
    where $\mathcal{P}_\rho : W^{1,p}\rightarrow H^{\rho,p}(\R^n)$, $1\leq p\leq\infty$, is the operator introduced in~\cref{eq:Intro-PRho}.
    Using~\cite[Lemma 2.8]{BellidoGarciaSaez-NonlocalElasticity}, we conclude that $\mathcal{P}_\rho$ is well-defined for $p\in (1,\infty)$.
    By \aref{thm:TranslationMechNonlocGrad-inftyCase}, this extends to $p=\infty$.
    Furthermore, the proof for $p=1$ essentially follows from~\cite[Lemma 4.5]{NonlocPerimetersVariations:ExtremalityAndDecomposability-CarioniDelFrandeIglesiasSchoenberger} where the authors proved the existence and continuity of $\mathcal{P}_\rho: BV(\R^n)\rightarrow BV^\rho(\R^n)$.
    For a definition of $BV^\rho$, see~\cref{def:BVRho} below.
    \\
    We define $v\ceq \mathcal{P}_\rho u$.
    Since $u\in W^{1,\infty}_c(\R^n) \subset W^{1,p}(\R^n)$ for all $p\in [1,\infty]$, we observe that $v\in H^{\rho,p}(\R^n)$ for all $p\in [1,\infty]$.
    Analogously to \aref{thm:LusinFractGradient}, the function $v$ satisfies $D^\rho v=f$ on $\Omega\setminus A$, with $\abs{A}\leq \eps \abs{\Omega}$.
    Ultimately, we verify the norm estimate.
    For $f\in L^p(\Omega), 1\leq p \leq \infty$, we compute
    \begin{align*}
        \norm{D^\rho v}_{L^p(\R^n)}
        &=
        \norm{D^\rho P_\rho u }_{L^p(\R^n)}
        =
        \norm{\nabla u}_{L^p(\R^n)}
        \\
        &=
        \norm{\nabla u}_{L^p(\Omega)}
        \leq c(n) \eps^{1/p-1}\norm{f}_{L^p(\Omega)}
        .
    \end{align*}
\end{proof}
It is natural to ask whether fractional gradients with finite horizons fall into this category.
The following remark sheds light on this situation.
\begin{remark}
    In particular, kernels of fractional gradients of order $s$, $\rho^s_\delta$, are admissible kernels with finite horizon.
    In this case, the function $v$ is in $H^{s,\infty}(\R^n)=W^{s,\infty}(\R^n)$ and therefore Hölder-continuous.
    \\
    Furthermore, if the gradient has horizon $\delta>0$, one can multiply the original function by a suitable cutoff $\chi$, with $\mathrm{spt}(\chi)\subset B_\delta(0)$.
    Hence, without loss of generality, one may assume that the function $v$ in \aref{prop:LusinNonlocalGradient} is supported in $\Omega_\delta$.
\end{remark}
Before extending our results to the $BV$ setting, we give a definition of said space.
\begin{definition}\label{def:BVRho}
    Let $\rho\in \mathcal{A}$ be an admissible kernel.
    A function $u\in L^1(\R^n)$ belongs to $BV^\rho(\R^n)$, if
    \[
    \mathrm{TV}_\rho(u)\ceq
        \sup\left\{
            \int_{\R^n} u \;\mathrm{div}^\rho \varphi \dd x
            \;
            \bigg\vert \;\varphi\in C^\infty_c(\R^n, \R^n),\; \norm{\varphi}_{L^\infty}\leq 1
        \right\}
        <\infty
    .
    \]
    If $\rho=\rho^s$, we abbreviate $BV^s\ceq BV^{\rho^s}$.
\end{definition}
We are now equipped to prove a version of~\cite[Theorem 3]{LusinForGradients-Alberti} for fractional gradients.
\begin{theorem}\label{thm:LusinFracGradBV}
    Let $\Omega \subset \R^n$ be open and $f\in L^1(\Omega)$.
    Then, there exist $u\in BV^s(\R^n)$ and a Borel function $g:\Omega\to \R^n$, such that 
    \begin{equation}
        \begin{split}
        &D^s u= f \dd \lambda^n + g \dd \Hmeasure^{n-1}
        \\
        &\int_{\Omega} \abs{g}\dd \Hmeasure^{n-1} \leq C(n) \norm{f}_{L^1(\Omega)}
        ,
        \end{split}
    \end{equation}
    where $\lambda^n$ is the $n$-dimensional Lebesgue measure and $\Hmeasure^{n-1}$ is the $n-1$-dimensional Hausdorff measure.
\end{theorem}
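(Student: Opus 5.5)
We would follow the pattern of \aref{thm:LusinFractGradient} and \aref{prop:LusinNonlocalGradient}: first produce the local object from Alberti's work, then transport it to the fractional side via the translation method. The local input is \cite[Theorem 3]{LusinForGradients-Alberti}: for $f\in L^1(\Omega;\R^n)$ there are $w\in BV(\R^n)$, supported in $\overline{\Omega}$, and a Borel $g:\Omega\to\R^n$ such that $Dw= f\dd\lambda^n + g\dd\Hmeasure^{n-1}\llcorner S$ for a countably $(n-1)$-rectifiable set $S\subset\Omega$. Alberti's bounds give $\norm{w}_{L^1}$ finite and $\int\abs{g}\dd\Hmeasure^{n-1}\leq C(n)\norm{f}_{L^1(\Omega)}$. (If $\abs{\Omega}=\infty$ we use \cite[Remark 2]{LusinForGradients-Alberti}, valid for $p=1$, and check that $w\in L^1$, not merely $Dw$ finite; otherwise we localize.) We then set $u\ceq (-\Delta)^{\frac{1-s}{2}}w$, or more robustly $u\ceq \mathcal{P}_{\rho^s}w$, the $BV\to BV^s$ operator from \cite[Lemma 4.5]{NonlocPerimetersVariations:ExtremalityAndDecomposability-CarioniDelFrandeIglesiasSchoenberger} already invoked in the proof of \aref{prop:LusinNonlocalGradient}.

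The key identity to establish is $D^s u = Dw$ as measures. We argue by duality: for $\varphi\in C^\infty_c(\R^n,\R^n)$ we use \cref{eq:divRhoVectorCalc} with $Q_{\rho^s}=\mathcal{I}_{1-s}$ to write $\mathrm{div}^s\varphi=\mathcal{I}_{1-s}\ast\mathrm{div}\varphi$. Writing $u=\mathcal{P}_{\rho^s}w$ and moving $\mathcal{I}_{1-s}$ onto $u$ gives
\[
\int u\,\mathrm{div}^s\varphi\dd x=\int \mathcal{Q}_{\rho^s}\mathcal{P}_{\rho^s}w\,\mathrm{div}\varphi\dd x=\int w\,\mathrm{div}\varphi\dd x=-\int\varphi\cdot\dd Dw .
\]
Hence $\mathrm{TV}_s(u)=\abs{Dw}(\R^n)<\infty$, so $u\in BV^s(\R^n)$ (given $u\in L^1$, which the cited lemma provides), and $D^su=Dw=f\dd\lambda^n+g\dd\Hmeasure^{n-1}$. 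The estimate on $g$ is then inherited verbatim from Alberti's theorem, since the translation method leaves the measure unchanged.

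The main obstacle is justifying that the formal computation is legitimate for merely $BV$ data. Two points need care: $\mathcal{I}_{1-s}$ is not integrable at infinity, so $\mathcal{Q}_{\rho^s}$ is only defined on functions with enough decay; and the symmetry $\int (\mathcal{P}w)(\mathcal{Q}\psi)=\int w\psi$ must be checked. To handle this, we first approximate $w$ by mollifications $w_\delta\in C^\infty_c$ with $w_\delta\to w$ in $L^1$ and $Dw_\delta\wconverges Dw$. For these the identity $D^s\mathcal{P}_{\rho^s}w_\delta=\nabla w_\delta$ holds classically. We then pass to the limit using the $L^1$-continuity of $\mathcal{P}_{\rho^s}$ on $BV$ from \cite[Lemma 4.5]{NonlocPerimetersVariations:ExtremalityAndDecomposability-CarioniDelFrandeIglesiasSchoenberger} and lower semicontinuity of $\mathrm{TV}_s$. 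If that continuity is only available on bounded $\Omega$, we first reduce to that case by an exhaustion argument.
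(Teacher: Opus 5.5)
Your proposal is correct and follows the paper's proof. Alberti's Theorem 3 gives $w\in BV(\R^n)$, you set $u=(-\Delta)^{\frac{1-s}{2}}w$, and you identify $D^s u=Dw$ by duality through $\mathrm{div}^s\varphi=\mathcal{I}_{1-s}\ast\mathrm{div}\varphi$. The only cosmetic difference is that you move $\mathcal{I}_{1-s}$ onto $u$ (using $\mathcal{I}_{1-s}\ast u=w$), whereas the paper moves $(-\Delta)^{\frac{1-s}{2}}$ onto the test function.

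One caveat: the paper invokes Lemma 4.5 of Carioni et al.\ only for compactly supported kernels, and $Q_{\rho^s}=\mathcal{I}_{1-s}\notin L^1$. So $u\in L^1$ is better justified as the paper does it: $BV(\R^n)\subset W^{1-s,1}(\R^n)$, which bounds $\norm{(-\Delta)^{\frac{1-s}{2}}w}_{L^1}$ by the $W^{1-s,1}$-seminorm of $w$. This also makes your mollification step unnecessary.
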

\begin{proof}
    In this proof, we will refer to measure-valued derivatives of functions $v$ as $Dv$.
    We apply~\cite[Theorem 3]{LusinForGradients-Alberti} in order to obtain $v\in BV(\R^n)$, such that
    \[
        Dv= f\dd \lambda^n + g\dd \Hmeasure^{n-1}, \int_{\Omega}\abs{g}\dd \Hmeasure^{n-1}\leq C(n)\norm{f}_{L^1(\Omega)}
    .
    \]
    Again, the main idea is invoking the translation principle, combined it with a suitable decomposition of the gradient and the divergence.
    Using~\cite[Proposition 2.1]{ComiStefani-Asymptotics-DistrApproachToFractionalSobolevSpaces}, we conclude that
    \begin{equation}\label{eq:FractDivSplit}
        \text{div}^s \phi
        =
        \text{div}\circ \mathcal{I}_{1-s} \phi
        =
        \mathcal{I}_{1-s}
        \circ
        \text{div}
        \phi
        \quad
        \text{and}
        \quad
        D^s \phi 
        =
        \nabla \circ \mathcal{I}_{1-s} \phi
        \quad
        \text{for }
        \phi \in \text{Lip}_c
    .
    \end{equation}
    We define $u\ceq (-\Delta)^{\frac{1-s}{2}}v$.
    Since $v\in BV\subset W^{\alpha,1}$ for all $\alpha\in (0,1)$, we observe that $u$ is well-defined.
    In order to show that $u\in BV^s$, we employ the symmetry of Laplacians and~\cref{eq:FractDivSplit}.
    For arbitrary $\phi \in C^\infty_c(\R^n; \R^n)$, we compute 
    \begin{align*}
        \int_{\R^n}
            u \text{div}^{s} \phi 
        \dd x
        &=
        \int_{\R^n}
            ((-\Delta)^{\frac{1-s}{2}}v) 
            \mathcal{I}_{1-s}
            \circ
            \text{div}
            \pars*{
                \phi
            }
        \dd x
        \\
        &=
        \int_{\R^n}
            v\;
            \pars*{
            (-\Delta)^{\frac{1-s}{2}} 
            \circ
                \mathcal{I}_{1-s}
                \circ
                    \text{div}
            }(\phi)
        \dd x
        =
        \int_{\R^n}
            v \;
            \text{div}\phi
        \dd x.
    \end{align*}
    Hence, $u\in BV^s$ and $D^s u= Dv= f \dd \lambda^n + g\dd \Hmeasure^{n-1}$.
    The claim now follows by~\cite[Theorem 3]{LusinForGradients-Alberti}.
\end{proof}
We now prove a version of~\cref{thm:LusinFracGradBV} for general nonlocal gradients, at the expense of assuming integrability and compact support.
\begin{proposition}\label{prop:LusinNonlocGradBV}
    Let $\Omega \subset \R^n$ be open, $\rho\in \mathcal{A}$ with compact support and $f\in L^1(\Omega)$.
    Then, there exist $u\in BV^\rho(\R^n)$ and a Borel function $g:\Omega\to\R^n$, such that
    \begin{equation}
        \begin{split}
        &D^\rho u= f \dd \lambda^n + g \dd \Hmeasure^{n-1}
        \\
        &\int_{\Omega} \abs{g}\dd \Hmeasure^{n-1} \leq C(n) \norm{f}_{L^1(\Omega)}
        \end{split}
        .
    \end{equation}
\end{proposition}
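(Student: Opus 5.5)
We follow the strategy of \aref{thm:LusinFracGradBV}, with the translation operator $\mathcal{P}_\rho$ in place of $(-\Delta)^{\frac{1-s}{2}}$. First we apply \cite[Theorem 3]{LusinForGradients-Alberti} to $f$. This yields $w\in BV(\R^n)$ with
\[
Dw= f\dd\lambda^n + g\dd\Hmeasure^{n-1}
\quad\text{and}\quad
\int_\Omega\abs{g}\dd\Hmeasure^{n-1}\leq C(n)\norm{f}_{L^1(\Omega)} .
\]
The goal is to set $u\ceq\mathcal{P}_\rho w$ and show that $u\in BV^\rho(\R^n)$ with $D^\rho u=Dw$ as measures. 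Once that identity holds, both claims follow directly from Alberti's result.

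To define $u$ concretely, we use the splitting from the proof of \aref{thm:TranslationMechNonlocGrad-inftyCase}. Fix a radial cut-off $\chi\in C^\infty_c(\R^n)$ with $\chi=1$ on $B_1(0)$, and set
\[
u\ceq(\chi V_\rho)\ast Dw+\mathrm{div}((1-\chi)V_\rho)\ast w .
\]
The first term is the convolution of the $L^1$ kernel $\chi V_\rho$ with the finite measure $Dw$. The second is the convolution of the $L^1$ function $\mathrm{div}((1-\chi)V_\rho)$ (cf.~\cite[Lemma 4.5]{NonlocPerimetersVariations:ExtremalityAndDecomposability-CarioniDelFrandeIglesiasSchoenberger}) with $w\in L^1$. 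Young's inequality for measures therefore gives $u\in L^1(\R^n)$. Alternatively, we may simply invoke the continuity of $\mathcal{P}_\rho:BV(\R^n)\to BV^\rho(\R^n)$ from the same lemma, and concentrate on identifying the nonlocal derivative.

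The key computation is the duality identity. For $\varphi\in C^\infty_c(\R^n;\R^n)$ we want
\[
\int_{\R^n}u\,\mathrm{div}^\rho\varphi\dd x=\int_{\R^n}w\,\mathrm{div}\varphi\dd x=-\int_{\R^n}\varphi\cdot\dd Dw .
\]
We proceed as in the surjectivity part of \aref{thm:TranslationMechNonlocGrad-inftyCase}. We move the convolutions onto $\mathrm{div}^\rho\varphi$ by Fubini, recombine $\chi V_\rho$ and $(1-\chi)V_\rho$ after integrating by parts against $w$, and obtain $\int w\,\mathrm{div}(V_\rho\ast\mathrm{div}^\rho\varphi)\dd x$. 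Commuting the operators via \cref{eq:divRhoVectorCalc} and then using \cref{eq:NonlocVC-divRhoVRho} turns this into $\int w\,\mathrm{div}\varphi\dd x$. As in that proof, a sign convention (replacing $u$ by $-u$ if necessary) has to be tracked. Since $\norm{\varphi}_{L^\infty}\le1$ implies that the right-hand side is bounded by $\abs{Dw}(\R^n)$, we get $\mathrm{TV}_\rho(u)\le\abs{Dw}(\R^n)<\infty$, so $u\in BV^\rho$. The identity also shows that $D^\rho u=Dw=f\dd\lambda^n+g\dd\Hmeasure^{n-1}$.

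The main obstacle is justifying the Fubini and integration-by-parts steps when one factor is a measure and $V_\rho$ is only $L^1_{\mathrm{loc}}$ and non-integrable at infinity. The quantity $V_\rho\ast\mathrm{div}^\rho\varphi$ need not be absolutely convergent a priori. We would handle this by first proving the identity for $w\in C^\infty_c$, where it is the Schwartz-class computation of \aref{thm:TranslationMechNonlocGrad-inftyCase}. We would then approximate a general $w\in BV$ by mollified, truncated $w_k$ with $w_k\to w$ in $L^1$ and $\abs{Dw_k}(\R^n)\to\abs{Dw}(\R^n)$. The $L^1$ bounds on $\chi V_\rho$ and $\mathrm{div}((1-\chi)V_\rho)$ give $u_k\to u$ in $L^1$, and both sides of the identity pass to the limit.
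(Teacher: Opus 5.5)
Your proposal is correct and follows the paper's route: you take $w\in BV(\R^n)$ from Alberti's Theorem 3, set $u=\mathcal{P}_\rho w$, and show $\int_{\R^n} u\,\mathrm{div}^\rho\varphi\dd x=\int_{\R^n} w\,\mathrm{div}\varphi\dd x$, which gives $u\in BV^\rho$ and $D^\rho u=Dw$. The only difference is in checking this duality: you use the explicit $V_\rho$-splitting and \cref{eq:NonlocVC-divRhoVRho}, as in the surjectivity part of \cref{thm:TranslationMechNonlocGrad-inftyCase}, whereas the paper moves $P_\rho$ across the pairing and uses $P_\rho\ast Q_\rho\ast\mathrm{div}\varphi=\mathrm{div}\varphi$; moreover, since $\rho$ has compact support, $\mathrm{div}^\rho\varphi=Q_\rho\ast\mathrm{div}\varphi\in C^\infty_c(\R^n)$, so your Fubini and integration-by-parts steps are justified directly and your approximation argument is not needed.
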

\begin{proof}
    Again, we use $D$ for measure-valued derivatives and $\nabla$ for ordinary derivatives.
    Analogously to before, we employ~\cite[Theorem 3]{LusinForGradients-Alberti} in order to obtain $v\in BV(\R^n)$, satisfying
    \[
        Dv= f\dd \lambda^n + g\dd \Hmeasure^{n-1}
        \quad
        \text{and} 
        \quad
        \int_{\Omega}\abs{g}\dd \Hmeasure^{n-1}\leq c(n)\norm{f}_{L^1(\Omega)}
    .
    \]
    Proceeding as in~\cref{thm:LusinFracGradBV}, we use the translation principle and a suitable decomposition of $\nabla$ and $\text{div}$.
    Note that, due to (H0) and $\rho$ having compact support, we observe that $\rho \in L^1$.
    Using~\cite[Proposition 2.6]{BellidoMorraCorralSchoenberger-NonLocGrad-FundamentalPoincareEmbeddings}, we conclude that for $\phi \in C^\infty_c(\R^n,\R^n), \psi\in C^\infty_c(\R^n,\R)$
    \[
        \text{div}^\rho \phi
        =
        \text{div} \mathcal{Q}_{\rho} \phi
        =
        \mathcal{Q}_{\rho} \text{div} \phi
        \quad
        \text{and}
        \quad
        D^\rho \psi= \mathcal{Q}_{\rho} \nabla \psi= \nabla \mathcal{Q}_\rho \psi
    .
    \]
    By~\cite[Lemma 4.3]{NonlocPerimetersVariations:ExtremalityAndDecomposability-CarioniDelFrandeIglesiasSchoenberger}, the operator $\mathcal{Q}_{\rho}$ extends to a mapping $BV^\rho\rightarrow BV$.
    Using~\cite[Lemma 4.5]{NonlocPerimetersVariations:ExtremalityAndDecomposability-CarioniDelFrandeIglesiasSchoenberger}, the operator $\mathcal{P}_\rho=\mathcal{Q}_{\rho}^{-1}$ exists and is well-defined.
    Let $u\ceq \mathcal{P}_\rho v\in BV^\rho$.
    For the sake of completeness, we compute $D^\rho u$.
    Hence, for arbitrary $\phi \in C^\infty_c$, we compute
    \begin{align*}
        \int_{\R^n}
            u \;\text{div}^{\rho} \phi 
        \dd x
        &=
        \int_{\R^n}
            \mathcal{P}_\rho v 
            \;
            \mathcal{Q}_{\rho}
            \text{div}\phi
        \dd x
        =
        \int_{\R^n}
            (P_\rho \ast v)\;
            (Q_\rho\ast \text{div} \phi)
        \dd x
        \\
        &=
        \int_{\R^n}
            v 
            (P_\rho \ast( Q_\rho \ast
            \text{div}\phi))
        \dd x
        =
        \int_{\R^n}
            v \;
            \text{div}\phi
        \dd x
        .
    \end{align*}
    Hence, $D^\rho u= D v= f \dd \lambda^n + g\dd \Hmeasure^{n-1}$, in the sense of measures.
    Again, the claim follows by~\cite[Theorem 3]{LusinForGradients-Alberti}.
\end{proof}
Note that, if $\omega_\delta \in \mathcal{C}_\delta$ is an admissible cut-off, then $\frac{1}{\abs{x}^{n+s-1}} \omega_\delta(x)\in \mathcal{A}\cap L^1$ and has compact support.
In particular,~\cref{prop:LusinNonlocGradBV} applies to the fractional gradient $D^s_\delta$ with horizon $\delta>0$.
Additionally, in the situation of~\cref{thm:LusinFracGradBV} and~\cref{prop:LusinNonlocGradBV}, one may extend $f$ by $0$ to $\R^n$ and therefore assume that $\Omega=\R^n$.

\begin{proposition}\label{prop:FractCurlZero}
    Let $\Omega\subset \R^n$ be an open set of finite measure and $f:\Omega\rightarrow \R^n$ be a Borel vector field.
    Let $(u_n)_{n\in \N}\subset H^{s,p}(\R^n)$ be a sequence of functions such that the sets $A_n\ceq \{ x\in \Omega: f \neq D^s u_n\}$ satisfy
    \[
        \abs{A_n}\rightarrow 0
        \quad
        \text{and}
        \quad
        \liminf_{n\to \infty} \abs{A_n}^{1/p-1}\norm{D^s u_n}_{L^p(\R^n)}=0
    .
    \]
    Then $\mathrm{curl}^s f=0$ in the sense of distributions.
\end{proposition}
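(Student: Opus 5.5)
The plan is to read the hypothesis literally and observe that it is very strong. It forces $f$ to vanish almost everywhere on $\Omega$, and then $\mathrm{curl}^s f=0$ holds trivially. We regard $f$ as extended by $0$ to $\R^n$, which is the only way $\mathrm{curl}^s f=((D^s f_i)_j-(D^s f_j)_i)_{i,j}$ makes sense as a distribution on $\R^n$. For $\varphi\in C^\infty_c(\R^n)$ it acts by $\langle (D^s f_i)_j,\varphi\rangle=-\int_{\R^n} f_i\,\partial_j(\mathcal{I}_{1-s}\ast\varphi)\dd x$, consistent with the splitting $D^s=\nabla\circ\mathcal{I}_{1-s}$ recalled before \cref{thm:LusinFractGradient}.

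\emph{Step 1: $\norm{D^s u_n}_{L^p}\to0$ along a subsequence.} Choose a subsequence $(n_k)$ realising the $\liminf$, so that $\abs{A_{n_k}}^{1/p-1}\norm{D^s u_{n_k}}_{L^p(\R^n)}\to0$. Since $1/p-1\le 0$ for all $p\in[1,\infty]$, there are two cases.
\begin{itemize}
\item For $p>1$ the factor $\abs{A_{n_k}}^{1/p-1}$ tends to $+\infty$ because $\abs{A_n}\to0$, and it is at least $1$ once $\abs{A_{n_k}}\le1$.
\item For $p=1$ the factor equals $1$.
\end{itemize}
In both cases $\norm{D^s u_{n_k}}_{L^p(\R^n)}\to0$. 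Indices with $\abs{A_{n_k}}=0$ cause no trouble. If one uses the convention $0^{-a}=\infty$, such terms can only contribute to a vanishing $\liminf$ when $D^s u_{n_k}=0$. Otherwise $f=D^s u_{n_k}$ almost everywhere on $\Omega$ directly, so the conclusion below holds either way.

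\emph{Step 2: $f=0$ a.e.\ on $\Omega$.} We refine the subsequence twice.
\begin{itemize}
\item First, pass to a further subsequence with $\sum_k\abs{A_{n_k}}<\infty$. By Borel–Cantelli, almost every $x\in\Omega$ lies in only finitely many $A_{n_k}$, so $f(x)=D^s u_{n_k}(x)$ for all large $k$.
\item Second, since $D^s u_{n_k}\to0$ in $L^p(\R^n)$, pass again to a subsequence converging to $0$ almost everywhere. For $p=\infty$ this is immediate.
\end{itemize}
Combining the two, $f(x)=\lim_k D^s u_{n_k}(x)=0$ for a.e.\ $x\in\Omega$.

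\emph{Step 3: conclusion.} With $f=0$ almost everywhere on $\R^n$ after the zero extension, every pairing $-\int f_i\,\partial_j(\mathcal{I}_{1-s}\ast\varphi)\dd x$ vanishes. Hence $\mathrm{curl}^s f=0$ in the sense of distributions.

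\emph{Main obstacle.} The only delicate point is the bookkeeping in Steps 1 and 2. One must extract subsequences compatible simultaneously with the $\liminf$, with summability of $\abs{A_{n_k}}$, and with a.e.\ convergence, and must treat the degenerate cases $\abs{A_n}=0$ and $p=1$ separately. Everything else is immediate.

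As a robustness check, one can also run the argument as intended by the structure of the statement. First note that $\mathrm{curl}^s D^s u_n=0$ distributionally, since the mixed derivatives of $\mathcal{I}_{1-s}\ast u_n$ commute. Then show that $D^s u_{n_k}\to f\mathbf 1_\Omega$ in $L^1_{\mathrm{loc}}$. This follows from the bound $\int_{A_n}\abs{D^s u_n}\le\abs{A_n}^{1-1/p}\norm{D^s u_n}_{L^p}$ together with $\abs{A_n}\to0$, and it allows one to pass to the limit in the closed equation $\mathrm{curl}^s=0$. Under the hypothesis as written, however, Steps 1–3 already suffice.
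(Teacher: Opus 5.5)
Your Steps 1--3 are a correct proof of the statement as printed, but they take a route genuinely different from the paper's. You exploit that the exponent $1/p-1$ is nonpositive. Since $\abs{A_n}^{1/p-1}\ge 1$ once $\abs{A_n}\le 1$, the hypothesis forces $\norm{D^s u_{n_k}}_{L^p(\R^n)}\to 0$ along a subsequence. Together with $\abs{A_{n_k}}\to 0$ this gives $f=0$ a.e.\ in $\Omega$, and the conclusion becomes trivial.

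The paper never uses the sign of the exponent. It sets $h_n=(\mathcal{I}_{1-s}u_n)|_\Omega$ and observes $\nabla h_n=D^s u_n$ on $\Omega$, hence $B_n=A_n$ and $\norm{\nabla h_n}_{L^p(\Omega)}\le\norm{D^s u_n}_{L^p(\R^n)}$. The hypothesis therefore transfers verbatim to classical gradients, and the paper then quotes Alberti's Proposition~5.

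Your argument is elementary and self-contained: it needs no mapping properties of $\mathcal{I}_{1-s}$ on $H^{s,p}$ and no external result. However, it works only because the printed hypothesis is degenerate. In effect it shows that the exponent should presumably read $1-1/p$, i.e.\ $\norm{D^s u_n}_{L^p}=o(\abs{A_n}^{1/p-1})$, which is the sharpness counterpart of the bound in \cref{thm:LusinFractGradient}. The paper's transfer is indifferent to the exponent and is the argument that survives this correction. Note, though, that Alberti's result concerns the classical gradients $\nabla h_n$, so what it actually delivers is information on the classical curl of $f$ in $\Omega$. Your aside on $\abs{A_n}=0$ also needs the convention $0^{1/p-1}=+\infty$ for $p>1$; the ``otherwise'' clause does not yield $f=0$.

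Your closing ``robustness check'' does not work as written. The hypothesis controls $D^s u_n$ only on $\Omega$: on $A_n$ via H\"older, and on $\Omega\setminus A_n$ through $f$. Nothing controls it on $\R^n\setminus\Omega$, so $D^s u_{n_k}\to f\mathbf{1}_\Omega$ in $L^1_{\mathrm{loc}}(\R^n)$ does not follow. Since $\mathrm{curl}^s$ is nonlocal, convergence in $L^1_{\mathrm{loc}}(\Omega)$ alone does not let you pass to the limit in it.

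In fact, with the corrected exponent the nonlocal conclusion for the zero extension is false. For $p>1$, take $u_n\equiv u\in C^\infty_c(\R^n)$ and $f=D^s u|_\Omega$; these satisfy the hypothesis with $A_n=\emptyset$. Yet $\mathrm{curl}^s(\mathbf{1}_\Omega f)=\mathcal{I}_{1-s}\ast\mathrm{curl}(\mathbf{1}_\Omega\nabla(\mathcal{I}_{1-s}\ast u))$ is in general nonzero because of the jump across $\partial\Omega$. What such a limiting argument actually gives is $\mathrm{curl} f=0$ in $\mathcal{D}'(\Omega)$.
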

\begin{proof}
    Once again, we use the decomposition of $D^s= \nabla \circ \mathcal{I}_{1-s}$.
    Using that $\mathcal{I}_{1-s}:H^{s,p}(\R^n)\rightarrow H^{1,p}(\R^n)$, we define $v_n \ceq \mathcal{I}_{1-s} u_n \in H^{1,p}(\R^n)$ and $h_n=v_n \vert_{\Omega} \in H^{1,p}(\Omega)$, which gives rise to
    \[
        B_n\ceq
        \{
            x\in \Omega:
            \nabla h_n \neq f
        \}
        =
        A_n
        \quad
        \text{and}
        \quad
        \norm{D^s u_n}_{L^p(\R^n)}
        =
        \norm{\nabla h_n}_{L^p(\R^n)}
        \geq
        \norm{\nabla h_n}_{L^p(\Omega)}
    .
    \]
    Therefore, by our assumption, we conclude that
    \[
        \abs{B_n}\rightarrow 0
        \quad
        \text{and}
        \quad
        \liminf_{n\to \infty} \abs{B_n}^{1/p -1}
        \norm{h_n}_{L^p(\Omega)}=0
        .
    \]
    Hence, we can apply Alberti's result for the local setting, cf.~\cite[Proposition 5]{LusinForGradients-Alberti}, in order to conclude that $\mathrm{curl}^s f=0$ in the sense of distributions.
\end{proof}
\cref{prop:FractCurlZero} generalizes to wider class of kernels, provided they have finite horizon $\delta>0$.
Furthermore, one can choose $(u_n)_{n\in \N} \subset H^{s,p}(\Omega_\delta)$ by multiplying with a suitable cut-off.

\section{$L^p$-Differentiability}\label{sec:ApproxDiffable}
Since Lusin-type approximation results are closely related to fine differentiability properties, we conclude by investigating $L^p$-differentiability and approximate differentiability.
Because $L^p$-differentiability is a stronger notion, we examine said property.
Additionally, $L^p$-differentiability is a global property whereas approximate differentiability is a local property.
Note that if $\Omega'\subset \Omega$, then $W^{\rho,p}(\Omega)\hookrightarrow W^{\rho,p}(\Omega')$ is continuous.
Therefore, the according statements concerning approximate differentiability immediately follow from the $L^p$-differentiability.
Nonetheless, we include the definition of approximate differentiability, because we use said property to derive fine properties of $W^{\rho,p}$-functions, cf.~\cref{rmk:FineProperties}. 
In order to do so, we follow the definition given in~\cite{EvansGariepyMeasureTheory}.
For a more detailed exposition, we refer to~\cite{FedererZiemerDistrDerivApproxDeriv} and the classical monograph~\cite{GMTFederer}.
Let us recall the notion of an \emph{approximate limit}.
A map $f:\R^n\to \R^m$ satisfies $\mathrm{ap}\lim_{y\to x}f(y)=a$, if there exists a measurable set $F\subset \R^n$ with density $1$ at $x$ such that
\[
    \lim_{F\ni y\to x}f(y)=a.
\]
The approximate limes superior and inferior are defined analogously.
\begin{definition}\label{def:ApproxDiffable}
    Let $k\in \N, \alpha \in \intervaloc{0,1}$ be given.
    A function $f:\R^n\rightarrow \R^m$ is approximately differentiable of class $k$ at $x$ if
    there exists a polynomial $P_x$ with degree at most $k$, such that
    \begin{equation}
        \mathrm{ap}\lim_{y\to x}
            \frac{\abs{f(y)- P_x(y)}}{\abs{y-x}^k}=0
        .
    \end{equation}
    We say that $f$ is approximately differentiable of class $(k,\alpha)$ if
    \begin{equation}
        \mathrm{ap}\limsup_{y\to x}
            \frac{\abs{f(y)- P_x(y)}}{\abs{y-x}^{k+\alpha}}<\infty
        .
    \end{equation}
\end{definition}
A slightly stronger generalization of differentiability was introduced in~\cite{zbMATH03312891,zbMATH03162202}, the \emph{$L^p$-differentiability}.
\begin{definition}
    Let $\Omega\subset \R^n$ be open, $f\in L^p(\Omega)$, $p\in \intervalcc{1,\infty}$ $k\in \N$ and $\alpha \in \intervaloc{0,1}$ be given.
    We say that f is \emph{$L^p$-differentiable of class $k$ at $x\in \Omega$}, if there exists a polynomial $P_x$ with degree at most $k$ such that
    \[  
        \lim_{\eps\to 0}
            \frac{1}{\abs{B_\eps(x)\cap \Omega}}
            \int_{B_\eps(x)\cap \Omega}\abs*{\frac{f(y)-P_x(y-x)}{\eps^k}}^p 
            \dd y
        =0.
    \]
    Subsequently, we say that $f$ is \emph{$L^p$-differentiable of class $k$ in $\Omega$} if $f$ is $L^p$-differentiable of class $k$ at almost all $x\in \Omega$ and the function $x\mapsto \norm{P_x}$ is in $L^p(\Omega)$.
    Here, the norm of a polynomial of degree $P(x)=\sum_{l=0}^k a_l x^l$ is given by 
    \[
        \norm{P}\ceq
        \sum_{l=0}^k
        \norm{a_l}_{\mathrm{op}}
        .
    \]
    The function $f$ is  \emph{$L^p$-differentiable of class $(k,\alpha)$ at $x\in \Omega$} if there is a polynomial $P_x$ with $\deg(P_x)\leq k$ and $h_x\geq 0$ such that
    \begin{equation}\label{eq:ApproxLpDiffFractOrder}
        \limsup_{\eps\to 0}
            \frac{1}{\abs{B_\eps(x)\cap \Omega}}
            \int_{B_\eps(x)\cap \Omega}\abs*{\frac{f(y)-P_x(y-x)}{\eps^{k+\alpha}}}^p 
            \dd y
        \leq
        h_x^p.
    \end{equation}
    Again, we refer to $f$ as \emph{$L^p$-differentiable of class $(k,\alpha)$ in $\Omega$} if~\cref{eq:ApproxLpDiffFractOrder} holds true at almost every $x\in \Omega$ and the functions $x\mapsto \norm{P_x}, x\mapsto h_x$ are in $L^p(\Omega)$.
\end{definition}
Let us draw a parallel to Lebesgue point and points of approximate continuity.
Classical Lebesgue points can be interpreted as points where the function is $L^1$-differentiable of class $0$, while points of approximate continuity can be interpreted as points, where the function is approximately differentiable of class $0$.
It is known that every Lebesgue point is a point of approximate continuity, but the reverse statement is false.
Analogously, every point where a function is $L^1$-differentiable of class $k$ is a point where the function is approximately differentiable of class $k$ but not vice versa.
\begin{remark}
    Note that there are also intermediate notions, such as approximate $L^p$ differentiability.
    Furthermore, we would like to point out that the notion of $L^1$-differentiability is frequently referred to as approximate differentiability.
    For a detailed exhibition and its generalization to sets and distributions, we refer to~\cite{zbMATH07050120,zbMATH07403055}.   
\end{remark}
For $p\in \intervalco{1,\infty}, s\in \intervaloo{0,1}$, we know that a function $u\in W^{s,p}(\R^n)$ is $L^p$-differentiability of class $(0,s)$, see~\cite[Proposition 4.3]{HashashFinePropOfBesovSpaces}.
Since $H^{s,p}(\R^n)=W^{s,p}(\R^n)$, $L^p$-differentiability holds true for $H^{s,p}(\R^n)$ if $p\in \intervaloo{1,\infty}$.
Approximate and $L^p$-differentiability is still open in the case $p=1$.
The following proposition is an extension of~\cite{HashashFinePropOfBesovSpaces} to more general kernels.
\begin{proposition}
    Let $\Omega\subset \R^n$ be an open set and $\rho\in \mathcal{A}$ an admissible kernel, such that $\inf_{\intervaloo{0,\eta}}\overline{\rho} >0$ for a radius $\eta>0$.
    Assume that 
    \[
         r\mapsto r^{n+\sigma-1}\overline{\rho} \text{ is nonincreasing on }(0,\eta)
    \]
    for some $\sigma \in (0,1)$.
    Then any $u\in W^{\rho,p}(\Omega)$, $1\leq p\leq \infty$, is $L^p$-differentiable of order $(0,\sigma)$ in $\Omega$.
\end{proposition}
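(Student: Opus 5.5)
The plan is to take $P_x \equiv u(x)$, a constant polynomial, at every Lebesgue point $x$ of $u$, and to dominate the Campanato-type quantity in \cref{eq:ApproxLpDiffFractOrder} by a pointwise seminorm density. For $1\leq p<\infty$ define
\[
    h_x^p \ceq \int_{\Omega}\frac{\abs{u(y)-u(x)}^p\,\rho(x-y)^p}{\abs{y-x}^{p(1-n)}}\,\frac{\dd y}{\abs{y-x}^{n}},
\]
so that $\int_\Omega h_x^p \dd x = \seminorm{u}_{W^{\rho,p}(\Omega)}^p<\infty$ by Fubini. Hence $h\in L^p(\Omega)$ and $h_x<\infty$ for almost every $x$. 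Also $x\mapsto \norm{P_x}=\abs{u(x)}$ lies in $L^p(\Omega)$ trivially. It therefore remains to prove, for a.e.\ $x$ and all small $\eps<\eta$,
\[
    \frac{1}{\abs{B_\eps(x)\cap\Omega}}\int_{B_\eps(x)\cap\Omega}\frac{\abs{u(y)-u(x)}^p}{\eps^{\sigma p}}\dd y\leq C\,h_x^p .
\]

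The key step is a lower bound on the weight for $\abs{y-x}<\eps<\eta$. Write $r=\abs{y-x}$. Since $r\mapsto r^{n+\sigma-1}\overline{\rho}(r)$ is nonincreasing, $r^{n+\sigma-1}\overline\rho(r)\geq \eps^{n+\sigma-1}\overline\rho(\eps)$. Together with $\inf_{(0,\eta)}\overline\rho=:c_0>0$ this gives
\[
    \overline\rho(r)\, r^{n-1}\geq c_0\,\eps^{n+\sigma-1}r^{-\sigma}.
\]
The weight in $h_x^p$ is $(\overline\rho(r) r^{n-1})^p r^{-n}$. Inserting the bound, it is at least $c_0^p\eps^{p(n+\sigma-1)}r^{-\sigma p-n}\geq c_0^p \eps^{p(n+\sigma-1)}\eps^{-\sigma p - n}$ on $B_\eps(x)$. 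One must check that the powers of $\eps$ recombine to $\eps^{-\sigma p}\abs{B_\eps}^{-1}\sim\eps^{-\sigma p-n}$. This is the main obstacle, because the naive bound leaves a spurious factor $\eps^{p(n+\sigma-1)}$. To remove it I will not use the monotonicity comparison at scale $\eps$. Instead I will compare at the fixed scale $\eta$: $r^{n+\sigma-1}\overline\rho(r)\geq \eta^{n+\sigma-1}\overline\rho(\eta)=:c_1>0$ for all $r<\eta$, so $\overline\rho(r)r^{n-1}\geq c_1 r^{-\sigma}$. The weight is then at least $c_1^p r^{-\sigma p-n}\geq c_1^p\eps^{-\sigma p-n}$ on $B_\eps(x)$. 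This yields the displayed inequality with $C = c_1^{-p}\abs{B_1}^{-1}$, provided that $\abs{B_\eps(x)\cap\Omega}\geq c\,\eps^n$. That lower density bound holds for every interior point $x$ once $\eps<\mathrm{dist}(x,\partial\Omega)$, and the statement is a $\limsup_{\eps\to0}$, so this suffices. The hypothesis $\inf\overline\rho>0$ is only needed to make sure that $c_1>0$, i.e.\ that $\overline\rho(\eta)$ (or its limit) is positive.

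For $p=\infty$ the same comparison applies. The seminorm bounds $\abs{u(y)-u(x)}\,\overline\rho(r)r^{n-1}$ for a.e.\ pair $(x,y)$, with the measure $\dd x\dd y/\abs{x-y}^n$ equivalent to Lebesgue measure, so $\abs{u(y)-u(x)}\leq c_1^{-1}\seminorm{u}\,r^{\sigma}$ a.e. This gives \cref{eq:ApproxLpDiffFractOrder} with the constant function $h_x=c_1^{-1}\seminorm{u}_{W^{\rho,\infty}(\Omega)}$. In this case $h\in L^\infty$, which is the required integrability.
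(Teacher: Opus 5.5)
Your proposal is correct and follows essentially the same route as the paper's proof: constant polynomial $P_x=u(x)$, the pointwise density $h_x$ whose $L^p$-norm is the $W^{\rho,p}$-seminorm, the lower bound $r^{n+\sigma-1}\overline{\rho}(r)\geq \eta^{n+\sigma-1}\inf_{(0,\eta)}\overline{\rho}$ taken at the fixed scale $\eta$ rather than at scale $\eps$, and, for $p=\infty$, the transfer of the a.e.\ bound from the measure $\dd x\,\dd y/\abs{y-x}^n$ to Lebesgue measure. Your observation that these two measures are mutually absolutely continuous is exactly what that last step requires; the paper's phrasing, that $\mu$ is absolutely continuous with respect to Lebesgue measure, states only one direction.
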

\begin{proof}
    Let $u,\sigma, \rho, \eta$ be as above and $x\in \Omega$ be arbitrary.
    Firstly, we investigate the case $p<\infty$.
    We define 
    \[  
        h(x)\ceq 
        \pars*{\int_{\Omega}
        \pars*{\abs{u(y)-u(x)} \rho(y-x) \abs{y-x}^{n-1}}^p
        \frac{\dd y}{\abs{y-x}^n}
        }^{1/p}
    .
    \]
    Since $u\in W^{\rho,p}$, we observe that $h\in L^p$.
    Furthermore, by our choice of $\sigma$, we infer
    \[  
        \sup_{r\in (0,\eta)}
        \frac{1}{r^{n+\sigma -1}\overline{\rho(r)}}
        \leq
        \frac{1}{\eta^{n+\sigma -1}\overline{\rho(\eta)}}
        \leq 
        \frac{1}{\eta^{n+\sigma-1}}(\inf_{\intervaloo{0,\eta}}\overline{\rho})^{-1}<\infty
        .
    \]
    Hence, for $r\in (0,\eta)$ such that $B_r(x)\subset \Omega$ and $\abs{u(x)},\abs{h(x)}<\infty$, one computes
    \begin{align*}
        &\frac{1}{r^n}
            \int_{B_r(x)}
                \abs{u(y)-u(x)}^p
            \dd y
        \leq
            \int_{B_r(x)}
                \abs{u(y)-u(x)}^p
            \frac{\dd y}{\abs{y-x}^n}
        \\
        &=
        \int_{B_r(x)}
            \pars*{
            \abs{u(y)-u(x)}
            \frac{\rho(y-x)\abs{y-x}^{n-1+\sigma}}{\rho(y-x)\abs{y-x}^{n-1+\sigma}}
            }^p
        \frac{\dd y}{\abs{y-x}^n}
        \\
        &\leq 
        r^{\sigma p}
        \int_{B_r(x)}
            \pars*{
            \abs{u(y)-u(x)}
            \frac{\rho(y-x)\abs{y-x}^{n-1}}{\rho(y-x)\abs{y-x}^{n-1+\sigma}}
            }^p
        \frac{\dd y}{\abs{y-x}^n}
        \\
        &\leq
        r^{\sigma p}
        (\eta^{n-1+\sigma}\inf_{(0,\eta)} \overline{\rho})^{-p}
        \int_{B_r(x)}
            \pars*{
            \abs{u(y)-u(x)}
            \rho(y-x)\abs{y-x}^{n-1}
            }^p
        \frac{\dd y}{\abs{y-x}^n}
        \\
        &\leq
        r^{\sigma p}
        (\eta^{n-1+\sigma}\inf_{(0,\eta)} \overline{\rho})^{-p}
        h(x)^p
        <\infty
        .
    \end{align*}
    Since $h$ is in $L^p$, we know that $h$ is finite at almost every $x\in \Omega$. 
    Hence, for such $x$ we conclude
    \[
        \limsup_{r\searrow 0}
        \frac{1}{r^{\sigma p} \abs{B_r(x)\cap \Omega}}
        \int_{B_r(x)\cap \Omega}
            \abs{u(y)-u(x)}^p
        \dd y 
        \leq
        C(\rho, \eta, \sigma, p, n)\abs{h(x)}^p
        <\infty
    ,
    \]
    showing that $u$ is $L^p$-differentiable of class $(0,\sigma)$ at $x$ with $P_x(y)= u(x)$.
    Due to $h$ being in $L^p$, we additionally observe that
    \[
    \int_{\Omega} 
        \pars*{ 
            \limsup_{r\searrow 0}
            \frac{1}{r^{\sigma p}\abs{\Omega \cap B_r(x)}}
            \int_{\Omega\cap B_r(x)}
            \abs{u(y)-u(x)}^p
            \dd y
        }
        \dd x
    \leq 
    C(\rho, \eta, \sigma, p, n)
    \norm{h}_{L^p}^p<\infty
    .
    \]
    Therefore, $u$ is $L^p$-differentiable of class $(0,\sigma)$ in $\Omega$.\\
    We now consider $p=\infty$.
    Hence, we have to show
    \begin{align*}
        \limsup_{r\to 0}\norm*{\frac{u(y)-u(x)}{r^\sigma}}_{L^\infty(B_r(x))}
        \leq h_x 
        \qquad
        \text{and}
        \qquad
        x\mapsto h_x \in L^\infty(\Omega).
    \end{align*}
    Once again, we define
    \[
        h_x(y)\ceq
        \abs{u(y)-u(x)}\rho(y-x)\abs{y-x}^{n-1}
    .
    \]
    By assumption, we have that $\pars*{(x,y)\mapsto h_x(y)} \in L^\infty(\Omega\times\Omega, \frac{\dd y \dd x}{\abs{y-x}^{n}})$.
    Defining the measure $\mu$ by $\dd \mu= \abs{y-x}^{-n} \dd y \dd x$, we observe that $\mu$ is absolutely continuous with respect to $\lambda^n\otimes \lambda^n$.
    Therefore, if 
    \[
        \pars*{(x,y)\mapsto h_x(y)} \in L^\infty(\Omega\times\Omega, \frac{\dd y \dd x}{\abs{y-x}^{n}})
        \text{, then } 
        \pars{(x,y)\mapsto h_x(y)}\in L^\infty(\Omega\times \Omega, \dd x \dd y).
    \]
    For $0<r<\eta$ and $x\in \Omega$, $y\in B_r(x)$ such that $\abs{h_x(y)}<\infty$, we observe that
    \begin{align*}
        &\frac{\abs{u(y)-u(x)}}{r^\sigma}
        \leq
        \pars*{
            \abs{u(y)-u(x)}\rho(y-x)\abs{y-x}^{n-1}
        }   
        \pars*{
            \rho(y-x)\abs{y-x}^{\sigma+n-1}
        }^{-1}
        \\
        &\leq
        h_x(y)
        \pars*{
            \eta^{n+\sigma-1}
            \overline{\rho}(\eta)
        }^{-1}
        \leq
        \norm{(x,y)\mapsto h_x(y)}_{L^\infty(\Omega\times\Omega, \frac{\dd y \dd x}{\abs{y-x}^{n}})}
        \pars*{
            \eta^{n+\sigma-1}
            \inf_{\intervaloo{0,\eta}}\overline{\rho}
        }^{-1}
        <\infty
        .
    \end{align*}
    This proves the claim.
\end{proof}
We end this section by proving an embedding result on domains of finite measure.
\begin{proposition}\label{prop:EmbeddingNonlocSoboSlobo}
    Let $\Omega\subset \R^n$ be a domain with finite measure and $\rho\in \mathcal{A}$ an admissible kernel, such that $\inf_{\intervaloo{0,\eta}}\overline{\rho} >0$ for a radius $\eta>0$.
    Let $\sigma \in (0,1)$ be given, such that 
    \[
         r\mapsto r^{n+\sigma-1}\overline{\rho} \text{ is almost nonincreasing on }(0,\eta).
    \]
    Then, the embedding $W^{\rho,p}(\Omega)\hookrightarrow W^{\sigma,p}(\Omega)$ is continuous.
    For $p=\infty$, the assumption $\abs{\Omega}<\infty$ can be dropped.
\end{proposition}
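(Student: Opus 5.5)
The plan is to compare the two Gagliardo-type seminorms pointwise in $(x,y)$, splitting $\Omega\times\Omega$ into the near-diagonal region $\{\abs{x-y}<\eta\}$ and its complement. Recall that the $W^{\sigma,p}$ seminorm integrates $\abs{u(y)-u(x)}^p\abs{y-x}^{-\sigma p}$ against $\dd x\dd y/\abs{y-x}^n$. The $W^{\rho,p}$ seminorm integrates $\pars{\abs{u(y)-u(x)}\rho(y-x)\abs{y-x}^{n-1}}^p$ against the same measure. Since the $L^p$ parts of the norms agree, only the seminorm needs to be estimated.

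On the near-diagonal region, write $\abs{y-x}^{-\sigma} = \rho(y-x)\abs{y-x}^{n-1}\cdot\pars{\rho(y-x)\abs{y-x}^{n-1+\sigma}}^{-1}$. By almost monotonicity, $r^{n+\sigma-1}\overline{\rho}(r)\geq c^{-1}\eta^{n+\sigma-1}\overline{\rho}(\eta^-)$ for $r\in(0,\eta)$. Since $\inf_{(0,\eta)}\overline{\rho}>0$, this is bounded below by $c^{-1}\eta^{n+\sigma-1}\inf_{(0,\eta)}\overline{\rho}>0$. This is exactly the computation used in the preceding proposition. Hence the integrand of the $W^{\sigma,p}$ seminorm is dominated there by $C(\rho,\eta,\sigma,n)^p$ times the integrand of the $W^{\rho,p}$ seminorm, and integration gives the bound by $C\seminorm{u}_{W^{\rho,p}(\Omega)}^p$.

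On the far region $\abs{x-y}\geq\eta$, the weight satisfies $\abs{y-x}^{-n-\sigma p}\leq\eta^{-n-\sigma p}$. Using $\abs{u(y)-u(x)}^p\leq 2^{p-1}(\abs{u(y)}^p+\abs{u(x)}^p)$ and Fubini, the integral is at most $2^p\eta^{-n-\sigma p}\abs{\Omega}\norm{u}_{L^p(\Omega)}^p$. This is the step, and the only one, where $\abs{\Omega}<\infty$ is needed. Alternatively one could integrate $\abs{y-x}^{-n-\sigma p}$ over $\{\abs{y-x}\geq\eta\}$, which is finite, and drop the assumption. I would check which bound is intended, but either closes the estimate, giving
\[
\norm{u}_{W^{\sigma,p}(\Omega)}\leq C\pars*{\norm{u}_{L^p(\Omega)}+\seminorm{u}_{W^{\rho,p}(\Omega)}}.
\]

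For $p=\infty$, the seminorm is an essential supremum. I would argue as in the $p=\infty$ part of the previous proposition, noting that $\dd x\dd y/\abs{y-x}^n$ and Lebesgue measure have the same null sets. Near the diagonal, the same pointwise domination bounds $\abs{u(y)-u(x)}/\abs{y-x}^\sigma$ by $C$ times the $\rho$-quantity. Far from it, the quotient is at most $2\eta^{-\sigma}\norm{u}_{L^\infty}$. No measure of $\Omega$ enters, so the finiteness assumption can be dropped.

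The main obstacle is making the lower bound on $r^{n+\sigma-1}\overline{\rho}(r)$ rigorous from only "almost nonincreasing" plus the positive infimum, since $\overline{\rho}(\eta)$ itself is not controlled. I would use the almost-monotonicity constant $c$ to compare with values at $r'\uparrow\eta$, each of which is at least $r'^{n+\sigma-1}\inf_{(0,\eta)}\overline{\rho}$.
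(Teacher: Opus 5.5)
Your proposal is correct and follows essentially the same route as the paper: the near/far split at $\abs{y-x}=\eta$, pointwise domination by $\pars{\rho(y-x)\abs{y-x}^{n-1+\sigma}}^{-1}$, the tail bound $2^p\eta^{-n-\sigma p}\abs{\Omega}\norm{u}_{L^p}^p$, and the same $p=\infty$ argument. Your alternative tail estimate via $\int_{\abs{z}\geq\eta}\abs{z}^{-n-\sigma p}\dd z<\infty$ is also valid and shows that $\abs{\Omega}<\infty$ is unnecessary for $p<\infty$ too, since the paper's justification uses the weight $\abs{y-x}^{-n}$ instead of $\abs{y-x}^{-n-\sigma p}$; in addition, you keep track of the almost-monotonicity constant and pass to the limit $r'\uparrow\eta$, a point the paper glosses over by using $\overline{\rho}(\eta)$.
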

\begin{proof}
    Firstly, we consider $p=\infty$.
    For $\abs{y-x}\leq \eta$, one computes
    \begin{align*}
        \frac{\abs{u(y)-u(x)}}{\abs{y-x}^\sigma}
        &=
        \pars*{
        \abs{u(y)-u(x)}
        \rho(y-x)\abs{y-x}^{n-1}
        }
        \pars*{
            \rho(y-x)
            \abs{y-x}^{\sigma+n-1}
        }^{-1}
        \\
        &\leq
        \seminorm{u}_{W^{\rho,\infty}}\eta^{1-n-\sigma}
        \pars*{\inf_{\intervaloo{0,\eta}}\overline{\rho}}^{-1}
    .
    \end{align*}
    Furthermore, for $\abs{y-x}\geq \eta$, we have that
    \[
        \frac{\abs{u(y)-u(x)}}{\abs{y-x}^\sigma}
        \leq
        2\norm{u}_{L^\infty}
        \eta^{-\sigma}
    .
    \]
    This implies that $W^{\rho,\infty}(\Omega)\hookrightarrow W^{\sigma, \infty}(\Omega)$ is continuous.
    Since both estimates are independent of $\abs{\Omega}$, the embedding is also continuous if $\abs{\Omega}=\infty$.
    Next, we consider $p\in \intervalco{1,\infty}$.
    Once again, for $0<\abs{y-x}<\eta$, we have that 
    \begin{align*}
        \frac{\abs{u(y)-u(x)}}{\abs{y-x}^\sigma}
        \leq
        \pars*{
        \abs{u(y)-u(x)}
        \rho(y-x)\abs{y-x}^{n-1}
        }
        \eta^{1-n-\sigma}
        \pars*{\inf_{\intervaloo{0,\eta}}\overline{\rho}}^{-1}
    \end{align*}
    and for $\eta \leq \abs{y-x}$
    \[
        \frac{\abs{u(y)-u(x)}}{\abs{y-x}^\sigma}
        \leq
        \pars*{
        \abs{u(y)}
        +
        \abs{u(x)}}
        \eta^{-\sigma}
    .
    \]
    Hence,
    \[
    \seminorm{u}_{W^{\sigma,p}(\Omega)}^p
    \leq
    \pars*{
    \seminorm{u}_{W^{\rho,p}(\Omega)}^p\eta^{p(1-n-\sigma)}
    (\inf_{\intervaloo{0,\eta}}\overline{\rho})^{-p}
    +2^p\eta^{-p\sigma-n}\norm{u}_{L^p}^p \abs{\Omega}
    }.
    \]
    The assumption on the finiteness of $\abs{\Omega}$ stems from the fact, that 
    \[
        (x,y)\mapsto
        \pars*{
            \abs{u(y)}+\abs{u(x)}
        }^p
        \abs{y-x}^{-n}
    \]
    is not necessarily integrable on $\{ (x,y)\in \Omega^2: \abs{y-x}\geq \eta\}$.
\end{proof}
\begin{remark}\label{rmk:FineProperties}
    In the situation of~\cref{prop:EmbeddingNonlocSoboSlobo}, we observe the following:\\
    Since $L^p$-differentiability is a stronger notion than approximate differentiability, we conclude that every $u\in W^{\rho,p}$ is approximately differentiable of order $(0,\sigma)$.
    Using the characterizations of approximate differentiability from~\cite{GMTFederer} and~\cite{FedererZiemerDistrDerivApproxDeriv}, we conclude that for every $\eps>0$, there exists a closed set $C\subset \Omega$ and $v\in C^{0,\sigma}(\Omega)$ such that
    \[
        u=v \text{ on } C
        \qquad
        \text{and}
        \qquad
        \abs{\Omega\setminus C}<\eps.
    \]
    Therefore, functions $u\in W^{\rho,p}$ coincides with a $C^{0,\sigma}$-function on an arbitrary large set.
\end{remark}
\noindent
\textbf{Acknowledgement}\\
I am deeply indebted to Carolin Kreisbeck for her encouragement and the discussions we had concerning this work. 
I am also deeply grateful to Ulrich Menne and Akram Sharif for introducing me to this class of problems and for their patient encouragement as I developed my often unstructured ideas. 
Finally, I would like to thank Giacomo Del Nin and Philipp Reiter for the many insightful and stimulating discussions that greatly improved my understanding of this problem and this article.
Funding support from the European Union and the Free State of Saxony (ESF) is gratefully acknowledged.

\let\oldthebibliography\thebibliography
\let\endoldthebibliography\endthebibliography
\renewenvironment{thebibliography}[1]{
  \begin{oldthebibliography}{#1}
	\footnotesize
    \setlength{\itemsep}{0em}
    \setlength{\parskip}{0em}
}
{
  \end{oldthebibliography}
}

\bibliographystyle{abbrvhref}
\bibliography{bibliothek}

\end{document}